\documentclass[12pt,bezier]{article}
\usepackage{amsmath,amssymb,amsfonts,euscript,graphicx}

\textwidth = 14 cm \textheight = 22 cm \oddsidemargin =0.7 cm
\evensidemargin = -3 cm \topmargin = 1 cm
\parskip = 2 mm
\title{\bf Indecomposable Decomposition  and Couniserial Dimension \footnotetext{{\it Key Words}: Uniform module;
Maximal right quotient ring; Indecomposable decomposition; Uniserial dimension;  Couniserial dimension;
 Von Neumann regular ring; 
  Semisimple module.}
\footnotetext {2010 {\it Mathematics Subject Classification}. Primary 16D70, 16D90, 16P70,
  Secondary 03E10, 13E10. }}
\author{ {\bf A. Ghorbani}$^{{\rm a}}$, {\bf S. K. Jain$^{{\rm b, c}}$ \thanks { Corresponding author;} and }
{\bf Z. Nazemian$^{{\rm a}}$ \thanks {This paper was presented at the OSU-Denison conference, May 10, 2014 and at the University of California, San Diego, June 21, 2014.}}\\
{\footnotesize{ $^{\rm a}$ Department of Mathematical
 Sciences, Isfahan
University of Technology}}\vspace{-1mm}\\ {\footnotesize{ P.O.Box: 84156-83111, Isfahan, Iran}}\\
{\footnotesize{ $^{\rm b}$ Department of Mathematics, Ohio University, Athens, OH 45701, USA }}\\
{\footnotesize{ $^{\rm c}$ King Abdulaziz University, Jeddah, SA }}\\
  {\footnotesize{a$_{-}$ghorbani@cc.iut.ac.ir}}\hspace{5mm}
 {\footnotesize{jain@ohio.edu}}\hspace{5mm}\\
{\footnotesize{z.nazemian@math.iut.ac.ir  }}\vspace{-1mm}
}

\def\be{\begin{enumerate}}
\def\ee{\end{enumerate}}

\newtheorem{theorem}{Theorem}[section]
\newtheorem{lemma}[theorem]{Lemma}
\newtheorem{proposition}[theorem]{Proposition}
\newtheorem{corollary}[theorem]{Corollary}
\newtheorem{definition}[theorem]{Definition}
\newtheorem{example}[theorem]{Example}

\newtheorem{remark}[theorem]{Remark}

\newenvironment{pproof}{\noindent{\bf Proof. }}{}

\date{}
\input{epsf}
\begin{document}
\maketitle
\begin{abstract}
{\noindent Dimensions like Gelfand, Krull, Goldie have an intrinsic role in
the study of theory of rings and modules.  They  provide useful technical
tools for studying their structure.  In this paper we define one of the
dimensions called couniserial dimension
 that measures how close a ring or module is to being uniform. Despite their  different objectives,  it
turns out that there are certain common properties   between the couniserial 
dimension and Krull dimension like each module having such a dimension
contains a  uniform submodule and has finite uniform dimension, among others.
Like all dimensions, this is an ordinal valued
invariant. Every module of finite length has couniserial dimension and
its value lies between the uniform dimension and the length of the module.
Modules with countable couniserial dimension are shown to possess
indecomposable decomposition. In particular, von Neumann regular ring with
countable couniserial dimension is semisimple artinian. If the maximal right quotient ring of a non-singular ring 
$R$ has a couniserial dimension as an $R$-module, then $R$ is a semiprime right Goldie ring. As one of the applications, it follows 
that  all 
right $R$-modules have couniserial dimension if and only if $R$ is a
semisimple artinian ring. }
\end{abstract}
\noindent{\bf 0. Introduction}
{  In this article we introduce a notion of dimension of a module, to
be called couniserial dimension. It is an ordinal valued invariant that is
in some sense a measure of how far a module is from being uniform. 
In order to define couniserial dimension for modules over a ring $R$, we
first define, by transfinite induction, classes $\zeta _{\alpha }$ of $R$%
-modules for all ordinals $\alpha \geq 1$. First we remark 
that if a module  $M$ is isomorphic to all its non-zero submodules, then $M$ must be uniform.
To start with, let $\zeta _{1}$
be the class of all uniform modules.  Next, consider an ordinal $\alpha >1$; if $%
\zeta _{\beta }$ has been defined for all ordinals $\beta <\alpha $, let $%
\zeta _{\alpha }$ be the class of those $R$-modules $M$ such that for every
non-zero submodule $N$ of $M$, where $N\ncong M$,  we have $N\in \bigcup_{\beta
<\alpha }\zeta _{\beta }$. If an $R$-module $M$ belongs to some $\zeta
_{\alpha }$, then the least such $\alpha $ is called the {\it couniserial dimension}  of 
$M$, denoted by c.u.dim$(M)$. For $M=0$, we define
c.u.dim$(M)=0$. If a non-zero module $M$ does not belong to any $\zeta_{\alpha }$, then we say that  c.u.dim$(M)$ is not
defined, or that  $M$ has no couniserial dimension. Equivalently, Proposition \ref{2.2}
shows that  }{an $R$-module $M$ has couniserial dimension if and only if for each
descending chain of submodules of $M$, $M_{1}\geq M_{2}\geq ...$, there
exists $n\geq 1$, either $M_{n}$ is uniform or $M_{n}\cong M_{k}$ for all $%
k\geq n$.}{ It is clear by the definition that every submodule and so every summand of a
module with couniserial dimension has couniserial dimension.  Also note that, for the
integer number $n$,  couniserial dimension of }$\Bbb{Z}^{n}$ is $n$. {An example is given
to show that the direct sum of two modules each with couniserial
dimension (even copies of a module) need not have couniserial dimension. In Section 2, we prove some basic properties of the couniserial
dimension. In Section 3, we prove  our main results.
It is shown in Theorem \ref{decomposation} that a module of countable
(finite or infinite) couniserial dimension can be decomposed in to
indecomposable modules. Theorem \ref{dedekind} shows that a Dedekind finite module with couniserial dimension is a finite direct 
sum of indecomposable modules. Theorem \ref{2.5} in Section 3 shows that for a right  non-singuar ring  $R$
 with  maximal right quotient ring $Q$, if $Q_R$ has
 couniserial dimension, then $R$ is a  semiprime right Goldie ring which is a finite product of piecewise  domains.
 The reader may compare this  with the wellknown result that a prime ring with Krull dimension is a right Goldie ring
 but need not be a piecewise domain. Furthermore, a prime right Goldie ring need not have couniserial  dimension as is also
 the case for
 Krull dimension. \\ 
\indent In Section 4, we give some applications of couniserial dimension.
It is shown in Proposition \ref{Artinian} that a module $M$ with finite
length is semisimple if and only if for every submodule $N$ of $M$ the right 
$R$-module $\oplus _{i=1}^{\infty }M/N$ has couniserial dimension.  As a
consequence a commutative noetherian ring $R$ is semisimple if and only if
for every finite length module $M$ the module $\oplus _{i=1}^{\infty }M$ has
couniserial dimension.} It is shown in Proposition \ref{anti is injective} that
if $P$ is an anti-coHopfian projective right $R$-module and $\oplus
_{i=1}^{\infty }E(P)$ has couniserial dimension, then $P$ is injective.
 As another application we show that  all right (left) $R$-module have couniserial dimension if and only if $R$ is semisimple
artinian (see Theorem \ref{final}).    
Several examples are included in the paper that demonstrates as to why 
the conditions imposed are necessary and what, if any, there is any relation with the corresponding result in the literature.

\section{\hspace{-6mm}.  Definitions and Notation.}
 Recall that a semisimple module $M$ is said to be {\it homogeneous} if $M$
is a direct sum of pairwise isomorphic simple submodules.
A module $M$ has
 {\it finite uniform dimension} (or {\it finite Goldie rank}) if $M$ contains no infinite direct
sum of non-zero submodules, or equivalently, there exist independent
uniform submodules $U_1, ... ,U_n$ in $M$ such that
$\oplus _{i = 1}^n U_i$ is an essential
submodule of $M$. Note that $n$ is uniquely determined by $M$. In this case, it is written u.dim$(M) = n$.\\
\indent For any module $M$, we define Z$(M) = \lbrace x \in   M : $ r.ann$(x)$ is an essential right ideal of $R \rbrace$ . It can be
easily checked that Z$(M)$ is a submodule of $M$. If Z$(M) = 0$, then $ M$
is called a {\it non-singular} module. In particular, if we take $M = R_R$, then $R$ is called right non-singular
if Z$(R_R) = 0$.\\
\indent A ring $R$ is a called {\it right Goldie} ring if it  satisfies the following two conditions:
(i) $R$ has ascending chain condition on right annihilator ideals and, (ii) u.dim$(R_R)$ is finite. \\
\indent Recall that a ring $R$ is {\it right V-ring} if all right simple $R$-modules are injective.  A ring $R$ is called {\it fully right idempotent}
 if $I = I^2$, for every right ideal $I$. We
recall that a right V-ring is fully right idempotent (see \cite [Corollary 2.2] {7}) and a prime fully right idempotent ring is right
non-singular
(see \cite[Lemma 4.3]{2}). So a  prime right V-ring is right non-singular.
  Recall that a module
$M$ is called {\it $\Sigma$-injective} if every direct sum of copies of $M$ is injective.
A ring $R$ is called {\it  right $\Sigma$-V-ring} if each simple right module is $\Sigma$-injective. \\
\indent In this paper, for a ring $R$, $Q = Q _{max} (R)$ stands  maximal right quotient ring  $R$.
It is well known  that if $R$ is a right  non-singular, then the injective hull of $R_R$,  $E(R_R)$,  is a ring and is equal to 
the  maximal right  quotient ring of $R$, \cite[Corollary 2.31]{Gooderlnonsingular}.\\
\indent A module  $M$ is called {\it Hopfian}  if $M$ is not isomorphic  to  any of its proper factor modules (equivalently, every onto 
endomorphism of 
$M$ is 1-1). 
 {\it Anti-Hopfian} modules are introduced by Hirano and Mogami \cite{Hirano}. Such    modules  are isomorphic to all  its  non-zero
 factor modules. A module $M$ is called uniserial if the lattice of submodules are linearly ordered.  Anti-Hopfian modules are 
 uniserial artinian. \\
  \indent Recall that a module $M$ is called {\it coHopfian} if it is not isomorphic to a proper submodule (equivalently, every 1-1 endomorphism of $M$ is onto).
 Varadarjan \cite{varadarjan} dualized the concept of anti-Hopfian
module and called it  anti-coHopfian module. With slight modification  we will call a non-zero module to be
{\it anti-coHopfian} if is isomorphic to all its non-zero submodules. A non-zero
module $M$ is called {\it uniform} if the intersection of any two non-zero
submodules is non-zero. We see an anti-coHopfian module is noetherian and
uniform.\\
\indent   An $R$-module $M$ has cancellation property if for every $R$-modules $N$ and $T$, $M \oplus N \cong M\oplus T $ implies 
  $N\cong T$.
  Every module with semilocal endomorphism ring has cancellation property  \cite{crash}. Since endomorphism ring of a simple
  module is a division ring,
  it has cancellation property. \\
 \indent Throughout this paper, let $R$ denote an arbitrary ring with
identity and all modules are assumed to be unitary and right modules, unless other words stated. If $N$ is a submodule
(resp. proper submodule) of $M$ we write $N\leq M$ (resp. $N<M$). Also, for
a module $M$, $\oplus _{i=1}^{\infty }M$ stands for countably infinite
direct sum of copies of $M$. If $N$ is a submodule of $M$
  and  $k > 1$, $\oplus _{i = k}^{\infty} N = \oplus _{i = 1}^{\infty} N_{i}$ is a submodule of 
$\oplus _{i = 1}^{\infty} M$
with
$N_1 = N_2 = ... = N_{k - 1} = 0$ and for $i \geq  k$ $N_i = N$. 

\section{\hspace{-6mm}. Basic and Preliminary Results.}

As defined in the introduction,  couniserial dimension is an ordinal valued number. The reader may refer to  \cite {stoll}  regarding 
ordinal 
numbers. We begin this section with a lemma and a remark on the definition of couniserial dimension. 

\begin{lemma} \label{anti-coHopfian}
An anti-coHopfian module  is uniform noetherian.
\end{lemma}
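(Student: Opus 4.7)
The plan is to split the statement into two independent assertions---that $M$ is noetherian and that $M$ is uniform---both extracted from the defining property that every non-zero submodule of $M$ is isomorphic to $M$.

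First I would observe that $M$ is cyclic. Picking any non-zero $x \in M$, the submodule $xR$ is non-zero, so the anti-coHopfian hypothesis gives $M \cong xR$; thus $M$ is isomorphic to a cyclic module, and is itself cyclic. The same reasoning applied to an arbitrary non-zero submodule $N \leq M$ yields $N \cong M$, so $N$ is cyclic, in particular finitely generated. Since every submodule of $M$ is finitely generated, $M$ is noetherian, and in particular has finite uniform dimension.

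For uniformity I would argue by contradiction. Suppose $M$ contains non-zero submodules $A, B$ with $A \cap B = 0$. Then $A \oplus B$ is a non-zero submodule of $M$, so the anti-coHopfian property yields $A \oplus B \cong M \cong A$. Iterating this isomorphism gives $A \cong A \oplus (\oplus_{i=1}^{n} B_i)$ with each $B_i \cong B$, for every positive integer $n$, so $A$ contains an infinite independent family of non-zero submodules. This contradicts the finite uniform dimension of $A \cong M$ established above, so $M$ must be uniform.

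The only delicate point is the initial observation that $M$ must be cyclic; once that is in hand, noetherianness follows from the standard criterion and uniformity follows from a quick iteration argument, so I do not expect any substantial obstacle.
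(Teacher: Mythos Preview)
Your proof is correct, and the noetherian half coincides with the paper's argument: both observe that $M$ is isomorphic to each of its non-zero cyclic submodules, hence cyclic, and that the same reasoning makes every submodule cyclic, so $M$ is noetherian.

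For uniformity, however, the paper takes a shorter route. Having shown $M$ noetherian, the paper simply notes that $M$ therefore contains a uniform submodule $U$; since $U \cong M$ by the anti-coHopfian hypothesis, $M$ itself is uniform. Your argument instead assumes non-zero $A,B\leq M$ with $A\cap B=0$, obtains $A\cong A\oplus B$, and iterates to embed arbitrarily large finite direct sums of copies of $B$ into $A$, contradicting the finite uniform dimension of $A\cong M$. This is perfectly valid, but it re-derives by hand what the existence of a uniform submodule already gives for free. The paper's approach exploits the defining property one more time (applied to $U$) rather than building an infinite independent family, which is both quicker and avoids the mild bookkeeping of the iteration.
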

\begin{proof}
Since $M$ is isomorphic to each  cyclic submodules,  $M$ is cyclic and every  submodule of $M$ is cyclic and so $M$ is noetherian.
Thus $M$ has a uniform submodule, say $U$. Since $U \cong M$, $M$ is uniform.$~\square$
\end{proof}

\begin {remark} \label{1.3}
{\rm  We make the convention that a statement  ``c.u.dim$(M) = \alpha$''  will mean that the couniserial dimension of $M$ exists 
and equals $\alpha$. By the definition of couniserial dimension,
 if $M$ has  couniserial dimension and $N$ is a submodule of $M$,
  then $N$ has couniserial dimension and c.u.dim$(N) \leq $ c.u.dim$(M)$.
  Moreover,   if $M$  is not uniform  and c.u.dim$(M) = $ c.u.dim$(N)$, where $N$ is a submodule of $M$,
   then $M \cong N$.
    On the other hand,
since every set of ordinal numbers has supremum, it follows immediately from the definition that  $M$ has 
couniserial dimension  if and only if for all submodules $N$ of $M$  
with  $N \ncong M$ , c.u.dim$(N)$ is  defined.  In  the latter case, 
if  $\alpha = $ sup$\lbrace$ c.u.dim$(N) ~ \vert \   N \leq M,  N \ncong M \rbrace  $, then   c.u.dim$(M) \leq \alpha + 1$. }
\end{remark}


The next proposition provides a working  definition for a module $M$ that has couniserial dimension.

\begin{proposition}\label{2.2}
 An $R$-module $M$ has couniserial dimension if and only if  for every descending chain of submodules
$  M_1 \geq M_2 \geq ... $,  there exists $ n \geq 1$ such  that
$M_n$ is uniform or  $ M_n \cong  M_k$ for all  $k \geq n $.
\end{proposition}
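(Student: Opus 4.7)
My strategy in both directions is to reduce quickly to Remark \ref{1.3} together with the well-foundedness of the ordinals.

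For the forward implication, assume $M$ has couniserial dimension. By Remark \ref{1.3} every submodule of $M$ also has couniserial dimension, bounded above by c.u.dim$(M)$. Given a descending chain $M_{1}\ge M_{2}\ge\cdots$, I would examine the weakly decreasing sequence of ordinals c.u.dim$(M_{1})\ge{}$c.u.dim$(M_{2})\ge\cdots$; well-foundedness of the ordinals forces it to become eventually constant, say from some index $n$ onwards. If $M_{n}$ is uniform, we are done. Otherwise the second clause of Remark \ref{1.3} (\emph{if a non-uniform module shares its couniserial dimension with one of its submodules, the two are isomorphic}) yields $M_{n}\cong M_{k}$ for every $k\ge n$.

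For the converse I would argue the contrapositive. Suppose $M$ has no couniserial dimension. By the characterization recorded in Remark \ref{1.3}, this is equivalent to the existence of a submodule $N\le M$ with $N\not\cong M$ such that $N$ itself lacks couniserial dimension. Since every uniform module lies in $\zeta_{1}$, no module without couniserial dimension can be uniform. Iterating this selection (a routine dependent-choice step) produces a strictly descending chain $M=M_{1}>M_{2}>M_{3}>\cdots$ in which no $M_{i}$ is uniform and $M_{i+1}\not\cong M_{i}$ for each $i$. This chain then violates the assumed stabilization condition, a contradiction, so $M$ must have had couniserial dimension after all.

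The main point deserving care is the verification in the recursive construction that $M_{i+1}$ can always be chosen to be simultaneously non-isomorphic to $M_{i}$ and itself without couniserial dimension; but this is exactly what the ``only if'' half of Remark \ref{1.3} supplies at each step, so there is no serious obstacle beyond keeping straight the two separate facts being drawn from that remark.
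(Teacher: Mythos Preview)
Your proposal is correct and follows essentially the same approach as the paper: for the forward direction the paper takes the infimum of the ordinals c.u.dim$(M_n)$ (which is attained, by well-ordering) rather than phrasing it as eventual stabilization of a weakly decreasing sequence, but the two are equivalent and the appeal to Remark \ref{1.3} is identical; for the converse the paper gives exactly your iterated construction of a chain of submodules each lacking couniserial dimension and not isomorphic to its predecessor.
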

\begin{proof}
{\rm ($\Rightarrow$) Let
$  M_1 \geq M_2 \geq ... $ be a descending chain of submodules of $M$.
Put $\gamma =$ inf $\lbrace$c.u.dim$(M_n) ~ \vert ~ n\geq 1\rbrace  $. So $\gamma = $ c.u.dim $(M_n)$ for some $n \geq 1$.
If $ M_n$ is not uniform,  then $ M_n \cong M_k$ for all $k \geq n $, because $\gamma $ is infimum. \\
($ \Leftarrow $)   If  $M$ does not have couniserial dimension,
then $M$ is not uniform and so there exists a  submodule $M_1$ of $M$ such that $M_1 \ncong M$ and $M_1$ does not
   have couniserial dimension, by the above remark. So
there exists a submodule $M_2$ of $M_1$ such that $M_2 \ncong M_1$ and $M_2$ does not   have couniserial dimension.
Continuing in this manner, we obtain a descending  chain of submodules $   M_1 \geq M_2 \geq ... $, such that
 for every $i \geq 1$,  $M_i$ does not   have couniserial dimension and $M_i \ncong M_{i+1}$, a contradiction. This completes the 
 proof.  $~\square$  }
\end{proof}

 As a consequence, we have the following corollary. 

\begin{corollary}\label{2.3}
Every artinian module  has  couniserial dimension.
\end{corollary}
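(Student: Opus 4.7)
The plan is to invoke the working characterization established in Proposition \ref{2.2}: $M$ has couniserial dimension if and only if every descending chain of submodules $M_1 \geq M_2 \geq \cdots$ eventually either hits a uniform term or stabilizes up to isomorphism. Given that $M$ is artinian, the descending chain condition guarantees something much stronger, namely strict equality from some index onward.

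Concretely, I would take an arbitrary descending chain $M_1 \geq M_2 \geq \cdots$ of submodules of an artinian module $M$ and apply the DCC to obtain an integer $n \geq 1$ with $M_n = M_k$ for every $k \geq n$. In particular $M_n \cong M_k$ for every $k \geq n$, which is the second alternative in the criterion of Proposition \ref{2.2}. Hence $M$ has couniserial dimension.

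There is no real obstacle here; the corollary is essentially a one-line consequence of Proposition \ref{2.2} combined with the definition of artinian. The only thing worth remarking is that we do not need to invoke the ``uniform'' alternative at all — stabilization up to equality (hence a fortiori up to isomorphism) always suffices. This also shows, as a by-product, that the couniserial dimension of an artinian module is bounded by any ordinal that majorizes the couniserial dimensions of all its proper submodules, which aligns with Remark \ref{1.3}.
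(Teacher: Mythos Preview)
Your argument is correct and matches the paper's approach exactly: the corollary is stated without proof as an immediate consequence of Proposition~\ref{2.2}, and your use of the DCC to force literal stabilization (hence isomorphism) is precisely the intended one-line justification. The closing remark about bounding the couniserial dimension is extraneous but harmless.
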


\begin{lemma} \label{less}
If $M$ is an $R$-module and {\rm c.u.dim}$(M) = \alpha$, then for any  $ 0 \leq \beta \leq \alpha$,
 there exists a submodule  $N$ of $M$ such that {\rm c.u.dim}$(N) = \beta$.
 \end{lemma}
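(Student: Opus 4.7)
The plan is a transfinite induction on $\alpha$. The cases $\beta = 0$ (take $N = 0$) and $\beta = \alpha$ (take $N = M$) are immediate, so the whole content is to produce, for each $0 < \beta < \alpha$, a submodule $N \leq M$ with c.u.dim$(N) = \beta$. The base cases $\alpha = 0, 1$ have nothing to prove. For the inductive step, assuming the conclusion holds for every couniserial dimension smaller than $\alpha > 1$, it suffices to exhibit a single nonzero submodule $N_0 \leq M$ satisfying $\beta \leq $ c.u.dim$(N_0) < \alpha$, since the induction hypothesis applied to $N_0$ then yields a submodule of $N_0$ (hence of $M$) of couniserial dimension exactly $\beta$.

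Before producing $N_0$, note that $\alpha > 1$ forces $M \notin \zeta_1$, i.e.\ $M$ is not uniform; Remark~\ref{1.3} then guarantees that any submodule $L \leq M$ with $L \not\cong M$ automatically has c.u.dim$(L) < \alpha$, because c.u.dim$(L) = $ c.u.dim$(M)$ combined with non-uniformity of $M$ forces $L \cong M$. The existence of $N_0$ splits into two sub-cases. When $\beta > 1$, the condition $M \notin \zeta_\beta$ delivers $N_0$ directly from the definition of $\zeta_\beta$: some nonzero submodule $N_0 \not\cong M$ has c.u.dim$(N_0) \geq \beta$, and by the observation above c.u.dim$(N_0) < \alpha$. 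When $\beta = 1$, $M \notin \zeta_1$ only tells us that $M$ is non-uniform, and the extra ingredient is Lemma~\ref{anti-coHopfian}: if every nonzero submodule of $M$ were isomorphic to $M$, then $M$ would be anti-coHopfian and hence uniform, contradiction. Thus some nonzero $N_0 \leq M$ satisfies $N_0 \not\cong M$, and then $1 \leq $ c.u.dim$(N_0) < \alpha$, which is exactly what the induction needs.

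The main obstacle, such as it is, is precisely this $\beta = 1$ step: the definition of $\zeta_1$ by itself does not yield a non-self-isomorphic submodule, and Lemma~\ref{anti-coHopfian} is the missing ingredient that converts ``$M$ not uniform'' into ``$M$ has a proper iso-class of nonzero submodule.'' Once $N_0$ is in hand, the induction hypothesis applied to $N_0$, whose couniserial dimension $\gamma_0$ satisfies $\beta \leq \gamma_0 < \alpha$, furnishes a submodule of couniserial dimension $\beta$. No distinction between successor and limit $\alpha$ is needed, and the remaining ordinal bookkeeping is routine.
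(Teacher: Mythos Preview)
Your proof is correct and follows the same transfinite-induction strategy as the paper: find a submodule $K \not\cong M$ with $\beta \le \text{c.u.dim}(K) < \alpha$, then apply the induction hypothesis to $K$. The only difference is cosmetic: the paper invokes the supremum clause of Remark~\ref{1.3} (namely c.u.dim$(M) \le \sup\{\text{c.u.dim}(N) : N \le M,\ N \not\cong M\} + 1$) to produce such a $K$ uniformly for all $1 \le \beta < \alpha$, so no separate treatment of $\beta = 1$ is needed; your explicit appeal to Lemma~\ref{anti-coHopfian} for that case is fine but unnecessary once the sup bound is in hand.
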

 \begin{proof}
 {\rm The proof is by transfinite induction on c.u.dim$(M) = \alpha$. The case $\alpha =  1$ is  clear.
  Let
 $\alpha > 1$ and $0 \leq \beta < \alpha$,
  then, using Remark \ref{1.3}, there exists a submodule $K$ of $M$ such that $K \ncong M$ and
  $\beta \leq$ c.u.dim$(K)$. Now since $\beta \leq$ c.u.dim$(K) < \alpha$, by induction hypothesis,
  there exists a submodule $N$ of $K$ such that c.u.dim$(N) = \beta$. $~\square$  }
 \end{proof}

As a consequence we have the following.

\begin{lemma} \label{uniform submodule}
 Every module with couniserial dimension has a uniform submodule.
 \end{lemma}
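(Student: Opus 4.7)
My plan is to deduce this lemma as an immediate corollary of the preceding Lemma \ref{less}. Suppose $M$ is a nonzero module with couniserial dimension, and set $\alpha = $ c.u.dim$(M)$. Since $M \neq 0$, by definition of couniserial dimension we have $\alpha \geq 1$. Now I would apply Lemma \ref{less} with $\beta = 1 \leq \alpha$ to obtain a submodule $N \leq M$ with c.u.dim$(N) = 1$. By the very definition of the class $\zeta_1$ given in the introduction, c.u.dim$(N) = 1$ means exactly that $N \in \zeta_1$, i.e., $N$ is uniform.

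The zero module case is trivial (either vacuously handled by allowing $N = 0$ or excluded if one insists on a nonzero uniform submodule; the standard convention for ``uniform'' requires nonzero, and $M \neq 0$ ensures $\alpha \geq 1$, so our $N$ with c.u.dim$(N) = 1$ is automatically nonzero). There is no real obstacle here — the work was already done in Lemma \ref{less}, which guarantees the existence of submodules of every couniserial dimension $\beta \leq \alpha$, including the minimal nonzero value $\beta = 1$ corresponding precisely to uniform modules. Thus the proof reduces to a single line invoking Lemma \ref{less} with $\beta = 1$.
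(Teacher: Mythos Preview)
Your proposal is correct and matches the paper's approach exactly: the paper states this lemma immediately after Lemma~\ref{less} with the phrase ``As a consequence we have the following'' and gives no further proof, so the intended argument is precisely to take $\beta = 1$ in Lemma~\ref{less}.
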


 In the next proposition
 we observe that  every module of finite couniserial dimension has finite  uniform dimension.

 \begin{lemma} \label{non}
  Let  $M$ be an $R$-module of finite couniserial dimension.  Then $M$ has finite uniform dimension  and {\rm u.dim}$(M) \leq$ {\rm c.u.dim}$(M)$.  
 \end{lemma}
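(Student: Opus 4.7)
The plan is to prove the statement by contradiction: assume c.u.dim$(M)=n$ is finite while u.dim$(M)\geq n+1$, and derive an impossible strictly descending chain of ordinals inside the ordinal $n+1$. This sidesteps a full transfinite induction and leans only on Remark \ref{1.3} and Lemma \ref{uniform submodule}.

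First I would manufacture $n+1$ independent \emph{uniform} submodules $U_1,\dots,U_{n+1}$ of $M$. Since u.dim$(M)\geq n+1$, there exist independent non-zero $V_1,\dots,V_{n+1}\leq M$; each $V_j$ inherits a couniserial dimension as a submodule of $M$ and so contains a uniform submodule $U_j$ by Lemma \ref{uniform submodule}. The $U_j$'s remain independent because they lie inside the independent $V_j$'s.

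Next I would build the descending chain
$$N_i \;=\; U_{i+1}\oplus U_{i+2}\oplus\cdots\oplus U_{n+1},\qquad i=0,1,\dots,n.$$
Because each summand is uniform, u.dim$(N_i)=n+1-i$, so the $N_i$'s have pairwise distinct finite uniform dimensions and are in particular pairwise non-isomorphic. For $0\leq i\leq n-1$, the module $N_i$ has u.dim$\geq 2$ and hence is not uniform, $N_{i+1}$ is a proper submodule of $N_i$, and $N_{i+1}\ncong N_i$. Remark \ref{1.3} then forces
$$\text{c.u.dim}(N_0) > \text{c.u.dim}(N_1) > \cdots > \text{c.u.dim}(N_{n-1}) > \text{c.u.dim}(N_n) \geq 1,$$
i.e., a strictly decreasing sequence of $n+1$ positive ordinals, whence c.u.dim$(N_0)\geq n+1$. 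On the other hand, $N_0\leq M$ gives c.u.dim$(N_0)\leq n$ by Remark \ref{1.3}, which is the desired contradiction. Therefore u.dim$(M)\leq n$ and in particular $M$ has finite uniform dimension.

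The main obstacle, and the point where care is required, is ensuring that consecutive links $N_i\supsetneq N_{i+1}$ really are non-isomorphic (so the strict-descent clause of Remark \ref{1.3} applies) and that the chain is long enough to overshoot $n$. Both concerns are resolved at once by forcing the $U_j$'s to be uniform, since then the uniform dimensions along the chain are the pairwise distinct integers $n+1,n,\dots,1$; this is precisely what Lemma \ref{uniform submodule} is needed for.
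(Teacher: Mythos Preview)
Your argument is correct. The paper proceeds differently: it runs an induction on $n={\rm c.u.dim}(M)$, invoking Lemma~\ref{less} to pick a submodule $N$ with ${\rm c.u.dim}(N)=n-1$, applies the inductive hypothesis to get ${\rm u.dim}(N)=m\leq n-1$, and then shows that either $N$ is essential in $M$ or $M\cong N\oplus U$ for some uniform $U$, yielding ${\rm u.dim}(M)\leq m+1\leq n$ in both cases. Your route avoids Lemma~\ref{less} entirely and uses only the weaker Lemma~\ref{uniform submodule} to force the summands to be uniform, after which the descending chain $N_0\supsetneq\cdots\supsetneq N_n$ with pairwise distinct uniform dimensions gives the strict drops in couniserial dimension directly via Remark~\ref{1.3}. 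The advantage of your approach is that the inequality ${\rm u.dim}(M)\leq{\rm c.u.dim}(M)$ becomes visually immediate (a chain of length $n+1$ cannot fit below ordinal $n$); the paper's induction, on the other hand, yields slightly more structural information along the way, namely that $M$ is isomorphic to a direct sum $N\oplus U$ with a uniform complement when $N$ is not essential.
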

 \begin{proof}
 The proof is by induction on c.u.dim$(M) = n$.  The case $n = 1$ is clear.  Let  $n > 1 $   and $N$ be a submodule of $M$
 such that c.u.dim$(N) = n - 1 $.   Thus by the inductive hypothesis, $N$ has  finite uniform dimension. Put  $m = $ u.dim$(N)$.
  If $N $ is not essential in $M$,
 then there exists a uniform submodule $U$ of $M$ such that $N \cap U = 0 $. Thus $N \oplus U$ is a submodule of $M$ of uniform
  dimension $m + 1$. Then $ (N \oplus U) \ncong N$ and so $ n - 1 < $ c.u.dim$(N \oplus U) \leq n$.
   Thus  $(N \oplus U) \cong M$, by Remark \ref{1.3}.  This proves the lemma.   $~\square$ 
   \end{proof} 

\begin{example}
{\rm There exist modules of infinite couniserial dimension but of finite uniform dimension. 
 Take $M = \Bbb{Z}_{{p}^{\infty}} \oplus \Bbb{Z}_{{p}^{\infty}}$.  Then $M$ is artinian $\Bbb{Z}$-module
of infinite couniserial dimension but of finite uniform dimension $2$.}   
\end{example}

In the following we consider equality  in the above lemma in a special case.
 
  \begin{lemma} \label{non1}
  Let  $M$ be an injective non-uniform  $R$-module of finite couniserial dimension.   Then  
  {\rm c.u.dim}$(M) = $ {\rm u.dim}$(M)$
   if and only if $M$ is finitely generated semisimple module.
 \end{lemma}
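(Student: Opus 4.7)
The plan is to combine the structure theory of injective modules with the couniserial-dimension arithmetic already in hand. Since $M$ is injective with $n = $ u.dim$(M)$ finite, $M$ admits a decomposition $M = E_1 \oplus \cdots \oplus E_n$ into indecomposable injective (hence uniform) summands. Both directions of the equivalence reduce to showing that each $E_i$ is simple if and only if c.u.dim$(M) = n$.

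For the sufficiency, suppose $M$ is finitely generated semisimple, say $M = S_1 \oplus \cdots \oplus S_n$ with each $S_i$ simple; then u.dim$(M) = n$, and Lemma \ref{non} yields c.u.dim$(M) \geq n$. I would prove the reverse inequality by induction on $n$: the case $n = 1$ is immediate since $M$ is then simple and therefore uniform. For $n > 1$, every proper submodule $N < M$ is semisimple of length strictly less than $n$ (by additivity of length in the exact sequence $0 \to N \to M \to M/N \to 0$), so in particular $N \ncong M$; by the inductive hypothesis c.u.dim$(N) \leq n-1$. Remark \ref{1.3} then gives c.u.dim$(M) \leq n$, and equality follows.

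For necessity, fix the injective decomposition $M = E_1 \oplus \cdots \oplus E_n$ with $n = $ u.dim$(M) = $ c.u.dim$(M)$, and I claim each $E_i$ is simple. Assume not, say $E_1$ has a proper nonzero submodule $U$, and form $N = U \oplus E_2 \oplus \cdots \oplus E_n \leq M$, which still has uniform dimension $n$. There are two cases. If $N \cong M$, then $N$ is injective, hence its direct summand $U$ is injective; but $U$ is a nonzero submodule of the uniform module $E_1$, so $U$ is essential in $E_1$ and $E_1$ is an injective hull of $U$, whence the injective module $U$ is a summand of the indecomposable $E_1$, forcing $U = E_1$ and contradicting properness. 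If instead $N \ncong M$, Remark \ref{1.3} gives c.u.dim$(N) < n$, which contradicts the inequality $n = $ u.dim$(N) \leq $ c.u.dim$(N)$ supplied by Lemma \ref{non}. Thus each $E_i$ is simple and $M$ is finitely generated semisimple.

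The main technical point is the dichotomy in the necessity direction: one must rule out the possibility that the constructed submodule $N$ is isomorphic to $M$. This is precisely where injectivity of $M$ does the essential work, through the observation that an injective essential submodule of an indecomposable injective must coincide with the ambient module. Every other step is a direct application of the preliminary lemmas on couniserial dimension.
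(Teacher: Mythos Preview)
Your proof is correct and follows essentially the same route as the paper's: decompose $M$ as $E_1\oplus\cdots\oplus E_n$ into uniform injectives, replace one $E_i$ by a proper nonzero submodule, and derive a contradiction from ${\rm u.dim}={\rm c.u.dim}=n$ together with Remark~\ref{1.3} and Lemma~\ref{non}. The only cosmetic difference is that the paper selects a non-injective submodule $K\le E_1$ at the outset (so the case $N\cong M$ is excluded immediately), whereas you allow an arbitrary proper $U$ and then eliminate the possibility $N\cong M$ by the injective-hull argument; your Case~1 is exactly the verification that such a $K$ exists.
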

 \begin{proof}{ $(\Leftarrow) $  is clear.\\
 $(\Rightarrow)$. Let   c.u.dim$(M) = $ u.dim$(M) = m > 1$.
 Then  $M = E_1 \oplus ... \oplus E_m$, where  $E_i$ are  uniform injective modules.
  If $E_1$ is not simple then there exists a non-injective
 submodule
  $K$ of $E_1$. Thus  $K \oplus E_2 \oplus ... \oplus E_m$ is not isomorphic to $M$. But clearly
  c.u.dim$(K \oplus E_2 \oplus ... \oplus E_m) \geq m$, a
   contradiction. This completes the proof. 
   $~\square$}
 \end{proof}

Note that the condition being injective is necessary in the above proposition. 

\begin{example}
{\rm  We can see easily that for $M = \Bbb{Z} \oplus \Bbb{Z}$, 
c.u.dim$(M) = $ u.dim$(M)$ $ = 2$ but $M$ is not semisimple. Also,  the next lemma  shows that there exists a module of finite uniform dimension  without couniserial dimension.}
 \end{example}

The following lemma shows that direct sum of two uniform modules may not have couniserial dimension.

\begin{lemma} \label{example}
Let   $D$ be  a domain and $S $ be a simple $D$-module. If   $S \oplus D$ as $D$-module  has couniserial dimension, then $D$ is principal
right  ideal 
domain. 
\end{lemma}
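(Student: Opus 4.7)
The plan is to prove the contrapositive: assuming $D$ is not a principal right ideal domain, I will exhibit a descending chain of submodules of $M = S \oplus D$ that, via Proposition \ref{2.2}, obstructs the existence of couniserial dimension for $M$.

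First I would transfer the chain condition from submodules of $M$ to right ideals of $D$. Given any descending chain of non-zero right ideals $I_1 \supseteq I_2 \supseteq \cdots$, the submodules $S \oplus I_1 \supseteq S \oplus I_2 \supseteq \cdots$ form a descending chain in $M$, and each term is non-uniform because it contains the disjoint non-zero submodules $S$ and $I_n$. If $M$ had couniserial dimension, Proposition \ref{2.2} would yield an index $n$ with $S \oplus I_n \cong S \oplus I_k$ for all $k \geq n$. Since the endomorphism ring of the simple module $S$ is a division ring, $S$ enjoys the cancellation property recalled in Section 1, so this forces $I_n \cong I_k$ as right $D$-modules for all $k \geq n$.

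Next I would construct a descending chain of right ideals whose consecutive terms violate this conclusion. Fix a non-zero non-principal right ideal $J$ of $D$ and an element $0 \neq a \in J$; then $aD \subsetneq J$ (equality would make $J$ principal) and in particular $J \subsetneq D$. As $D$ is a domain, left multiplication by $a$ is an injective, order-preserving map on right ideals which sends strict inclusions to strict inclusions, so iterating it starting from the pair $D \supsetneq J \supsetneq aD$ produces the strictly descending chain
\[ J \supsetneq aD \supsetneq aJ \supsetneq a^2 D \supsetneq a^2 J \supsetneq \cdots \]
of non-zero right ideals of $D$.

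Finally, the same injectivity shows $a^k D \cong D$ and $a^k J \cong J$ as right $D$-modules for every $k \geq 0$, so the terms of the chain alternate in isomorphism type between $J$ and $D$. A right ideal isomorphic to $D$ as a right $D$-module is cyclic (generated by the image of $1$) and hence principal, so the non-principality of $J$ forces $J \not\cong D$. Consequently consecutive terms of the chain lie in distinct isomorphism classes, which contradicts the stabilization derived in the first step and completes the proof. I expect the only subtle bookkeeping to be the strictness of each inclusion in the chain, but every strict inclusion reduces to the domain property that left multiplication by the non-zero element $a$ is injective.
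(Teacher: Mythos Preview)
Your proof is correct and follows essentially the same strategy as the paper: assume a non-principal right ideal exists, build a descending chain of right ideals alternating between principal and non-principal isomorphism types, lift it to $S\oplus D$, and use Proposition~\ref{2.2} together with cancellation of $S$ to derive a contradiction. The only cosmetic difference is that you realize the chain concretely via powers of a single element $a\in J$, whereas the paper makes an arbitrary choice of a cyclic subideal at each step.
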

  \begin{proof}
   Let $I$ be a non-cyclic right ideal of $D$.   Choose a non-zero element $x \in I$. Set  $J_1 = xR$ which is isomorphic to $D$.
Thus
  there exists a right  ideal $J_2$ of $D$ such that $J_2 \cong I$ and $J_2 \leq J_1$. Now  let $J_3$ be
 a cyclic right ideal contained in $J_2$ and by continuing this manner we have a descending chain
 $J_1 \geq J_2 \geq ...$ of right ideals of $D$
  where for each odd integer $i$, $J_i$ is cyclic and for each even integer $i$, $J_i$ is not cyclic. Now consider
 the descending chain $S \oplus J_1 \geq S \oplus  J_2 \geq ...$ of submodules of $S \oplus D$. Since $S$ has cancellation  property
 and for each $i$, $S \oplus J_i$ is not uniform, by using Proposition \ref{2.2}, we see that, for some $n$, 
 $J_n \cong J_{n + 1}$, a contradiction. Thus $D$ is a principal right ideal domain.
\end{proof}  

\begin{remark}
 {\rm (1)
 The simple module $S$ in the statement of the Lemma \ref{example}
  can be replaced by any cancellable module. Indeed it follows from the 
 Theorem \ref{2.5},  proved latter   if the  maximal right quotient ring $Q$ of a domain
 $D$  as $D$-module has couniserial dimension,
 then  $Q_D$ has cancellation property and so if $Q \oplus D$ as $D$-module has couniserial dimension, 
  $D$ must  be right principal ideal domain. \\
 (2)  Also, since a Dedekind domain has cancellation property,   similar proof  shows
  that if $D$ is a Dedekind domain which  is 
 not right principal ideal domain, then $D \oplus D$ does not have couniserial dimension. This example shows that even 
 direct sum of a uniform module with itself may not have couniserial dimension.
 }
\end{remark}

The definition of addition two ordinal numbers can be given inductively.
   If $ \alpha $ and $ \beta $  are two ordinal numbers
  then $ \alpha + 0 = \alpha $,  $ \alpha + (\beta + 1) = (\alpha + \beta) + 1 $ and if $ \gamma $ is a limit ordinal then
   $ \alpha+\gamma $ is the limit of $ \alpha + \beta $ for all $ \beta < \gamma $ (See \cite{stoll}).

  \begin{lemma} \label{ord}\rm (See \cite [Theorem 7.10]{stoll}).   For ordinal numbers  $\alpha $,  $\beta $ and $\gamma$,
  we have the following:\\
{\rm  (1)} If  $\alpha < \beta $, then $\gamma + \alpha < \gamma + \beta $. \\
  {\rm (2)} If $\alpha < \beta $, then $ \alpha + \gamma \leq \beta + \gamma$.
  \end{lemma}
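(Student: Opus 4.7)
The plan is to prove both assertions by transfinite induction, directly using the recursive definition of ordinal addition recalled just above the lemma: $\alpha + 0 = \alpha$, $\alpha + (\delta + 1) = (\alpha + \delta) + 1$, and $\alpha + \lambda = \sup_{\delta < \lambda}(\alpha + \delta)$ when $\lambda$ is a limit ordinal. Since the two inductive variables differ between the two parts, I would treat them separately.

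For (1), I would fix $\gamma$ and $\alpha$ and induct on $\beta$. The case $\beta = 0$ is vacuous, since no $\alpha$ satisfies $\alpha < 0$. For the successor step $\beta = \delta + 1$, the hypothesis $\alpha < \beta$ gives either $\alpha = \delta$ or $\alpha < \delta$; in the first situation $\gamma + \alpha = \gamma + \delta < (\gamma + \delta) + 1 = \gamma + \beta$, and in the second the inductive hypothesis yields $\gamma + \alpha < \gamma + \delta < \gamma + \beta$. For the limit step, I would use that if $\beta$ is a limit and $\alpha < \beta$, then $\alpha + 1 < \beta$ as well, so
$$\gamma + \alpha \;<\; (\gamma + \alpha) + 1 \;=\; \gamma + (\alpha + 1) \;\leq\; \sup_{\delta < \beta}(\gamma + \delta) \;=\; \gamma + \beta.$$

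For (2), I would fix $\alpha < \beta$ and induct on $\gamma$. The base case $\gamma = 0$ reduces to $\alpha \leq \beta$. For $\gamma = \delta + 1$, the inductive hypothesis $\alpha + \delta \leq \beta + \delta$ gives $\alpha + \gamma = (\alpha + \delta) + 1 \leq (\beta + \delta) + 1 = \beta + \gamma$, since the successor operation on ordinals is order-preserving. For a limit $\gamma$, the inductive hypothesis yields $\alpha + \delta \leq \beta + \delta$ for every $\delta < \gamma$, and taking suprema preserves $\leq$, so $\alpha + \gamma \leq \beta + \gamma$.

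There is no real technical obstacle here; the only subtlety worth flagging is the asymmetry between (1) and (2). In (2) one cannot hope for strict inequality, as witnessed by $0 + \omega = \omega = 1 + \omega$, so the inductive argument must be arranged so that the limit step only requires monotonicity of $\sup$, not strict monotonicity. This is exactly why the induction in (1) is performed on the right operand (where the limit step produces a strict gap via the intermediate successor $\alpha + 1 < \beta$), while the induction in (2) is performed on $\gamma$ and accepts the weaker $\leq$ conclusion at every stage.
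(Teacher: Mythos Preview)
Your proof is correct; both transfinite inductions are set up on the appropriate variable and the limit steps are handled carefully, including the observation that strict inequality cannot be preserved in (2). The paper, however, does not supply its own proof of this lemma at all: it is stated purely as a citation to \cite[Theorem 7.10]{stoll} and used as a black box in the subsequent Proposition~\ref{b}, so there is nothing to compare your argument against beyond noting that it fills in what the paper leaves to the reference.
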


We call an $R$-module $M$ {\it fully coHopfian} if every submodule of $M$ is coHopfian. Note that
 artinian modules are fully coHopfian.
If $I$ is  the set of prime numbers, then  $\oplus _{p \in I} {\Bbb {Z} _p}$ is an  example of fully coHopfian
$\Bbb{Z}$-module that it is not artinian.
   
\begin{proposition}{\label{b}} Let $M = M_1 \oplus M_2$ be a fully coHopfian $R$-module with couniserial dimension.
  Then c.u.dim$(M) \geq $ c.u.dim$(M_1) +  $ c.u.dim$(M_2)$.
  \end{proposition}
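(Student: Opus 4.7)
The plan is to argue by transfinite induction on $\beta := $ c.u.dim$(M_2)$, with $\alpha := $ c.u.dim$(M_1)$. The degenerate cases $M_1 = 0$ or $M_2 = 0$ give the inequality trivially, so one may assume both summands are non-zero; in particular $M_1 \cap M_2 = 0$ forces $M$ to be not uniform, which is the property I intend to exploit together with Remark~\ref{1.3}.

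For the inductive step, fix any $\gamma < \beta$. By Lemma~\ref{less} there is a submodule $N_\gamma \leq M_2$ with c.u.dim$(N_\gamma) = \gamma$. Since $\gamma < \beta = $ c.u.dim$(M_2)$, we have $N_\gamma \neq M_2$, so $M_1 \oplus N_\gamma$ is a proper submodule of $M = M_1 \oplus M_2$. Because $M$ is coHopfian (being fully coHopfian), this forces $M_1 \oplus N_\gamma \not\cong M$. Moreover, every submodule of $M_1 \oplus N_\gamma$ is a submodule of $M$, hence coHopfian, so $M_1 \oplus N_\gamma$ is itself fully coHopfian; since c.u.dim$(N_\gamma) = \gamma < \beta$, the inductive hypothesis applied to the decomposition $M_1 \oplus N_\gamma$ gives c.u.dim$(M_1 \oplus N_\gamma) \geq \alpha + \gamma$. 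Combining $M_1 \oplus N_\gamma \not\cong M$ with the fact that $M$ is non-uniform, the contrapositive of Remark~\ref{1.3} yields c.u.dim$(M_1 \oplus N_\gamma) < $ c.u.dim$(M)$, and therefore c.u.dim$(M) \geq \alpha + \gamma + 1$.

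To conclude, I would split on the type of $\beta$. If $\beta = \beta' + 1$ is a successor, taking $\gamma = \beta'$ in the bound above yields c.u.dim$(M) \geq \alpha + \beta' + 1 = \alpha + \beta$ by the recursive definition of ordinal addition. If $\beta$ is a limit ordinal, then by definition $\alpha + \beta = \sup\{\alpha + \gamma : \gamma < \beta\}$ (cf.~Lemma~\ref{ord}), and the inequalities c.u.dim$(M) \geq \alpha + \gamma + 1 > \alpha + \gamma$ valid for every $\gamma < \beta$ force c.u.dim$(M) \geq \alpha + \beta$.

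The principal subtlety --- and the point where one really uses \emph{fully} coHopfian, not merely coHopfian on $M$ alone --- is that the inductive step is applied to the submodule $M_1 \oplus N_\gamma$, which in its own right must be coHopfian (indeed fully coHopfian) so that the same non-isomorphism argument can be re-run on its smaller submodules at the next level of recursion. Everything else is routine bookkeeping: Lemma~\ref{less} supplies the required submodules of prescribed couniserial dimension, Remark~\ref{1.3} converts proper non-isomorphism into a strict inequality of couniserial dimensions, and the ordinal arithmetic reduces cleanly to the successor/limit dichotomy.
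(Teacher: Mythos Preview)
Your proof is correct and follows essentially the same approach as the paper: transfinite induction on c.u.dim$(M_2)$, using Lemma~\ref{less} to produce submodules $N_\gamma \leq M_2$ of prescribed couniserial dimension, then invoking the coHopfian property to ensure $M_1 \oplus N_\gamma \ncong M$ so that Remark~\ref{1.3} yields a strict inequality. The only organizational difference is that you derive the bound c.u.dim$(M) \geq \alpha + \gamma + 1$ uniformly for all $\gamma < \beta$ before splitting into the successor and limit cases, whereas the paper splits first; this is cosmetic.
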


    \begin{proof}
   {\rm   We may assume  $M_1, M_2 \neq 0$. We use transfinite   induction on  c.u.dim$(M_2) = \alpha$.
     Since  $M_1 \ncong M$,
   c.u.dim$(M)$ $\geq $ c.u.dim$(M_{1}) + 1$. So the case $\alpha = 1$ is clear.  Thus, suppose  $\alpha > 1$
   and for every right $R$-module $L$ of couniserial dimension less than $\alpha$, c.u.dim$(M_1 \oplus L ) \geq  $ c.u.dim$(M_{1}) + $
   c.u.dim$(L)$.
    If $\alpha$ is a successor ordinal, then there exists an ordinal number $\gamma$ such that
     c.u.dim$(M_2) = \gamma  + 1$. Using Lemma \ref{less}, there exists a non-zero submodule $K$ of $M_2$
   such  that c.u.dim$(K) = \gamma < \alpha $.  So by induction hypothesis $${\rm c.u.dim}(M_1) +\gamma  =
   {\rm c.u.dim}(M_1) +  {\rm c.u.dim}(K) \leq  {\rm c.u.dim}(M_1 \oplus K).$$
    Using our assumption and Remark \ref{1.3}, we have  c.u.dim$(M_1 \oplus K) <  $ c.u.dim$(M)$ and hence
       c.u.dim$(M_{1}) + $  c.u.dim$(M_{2}) \leq$ c.u.dim$(M)$.\\
    \indent  If $\alpha$
     is a limit ordinal and $ 1 \leq \beta < \alpha$,
   then by Remark \ref{1.3}, there exists a  non-zero submodule $K$ of $M_2$ such that
    $\beta \leq $ c.u.dim$(K)$.
    Then by induction hypothesis ${\rm c.u.dim}(M_1)  + \beta \leq  {\rm c.u.dim}(M_1) +  {\rm c.u.dim}(K) \leq
    {\rm c.u.dim}(M_1 \oplus K) <  {\rm c.u.dim}(M).$
    Therefore  c.u.dim$(M_1) + \alpha = $ sup$\lbrace$ c.u.dim$(M_{1}) + \beta \mid \beta < \alpha
    \rbrace \leq  $  c.u.dim$(M)$. $~\square$   }
\end{proof}

The condition fully coHopfian  of Proposition \ref{b} is necessary.  
\begin{example}
{\rm For  the $\Bbb{Z}$-modules $M = \oplus_{i = 1}^{\infty} \Bbb{Z}_{p}$,
 and $ L = \Bbb{Z}_{p}$, we have
$M \cong M \oplus  L$. One can see c.u.dim$(M) = \omega$ and so
c.u.dim$(M) \ngeq $ c.u.dim$(M) + $ c.u.dim$(L)$.
Also,  in general, we don't  have the equality
in Proposition \ref{b}.  Consider the $\Bbb{Z}$-module
   $ M = \Bbb{Z}_2  \oplus \Bbb{Z}_4$. Then,  $M$ is fully coHopfian and $3 = $ c.u.dim$(M) > $
   c.u.dim$(\Bbb{Z} _2) + $ c.u.dim$(\Bbb{Z} _4 )$.}
   \end{example}

Here we prove another result on fully coHopfian module:

\begin{proposition} \label{simple2}
Let $M$ be an $R$-module and $N$ be a cancellable module (for example a simple module)
 such that $N \oplus M$ has couniserial dimension. If $M$ is fully coHopfian, then $M$ is artinian.
\end{proposition}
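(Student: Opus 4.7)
The plan is to argue by contradiction: suppose $M$ is not artinian, so there is a strictly descending chain of submodules $M = M_{0} > M_{1} > M_{2} > \cdots$, and derive a contradiction from the couniserial dimension of $N \oplus M$ combined with the hypotheses on $N$ and $M$.

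First I would form the descending chain
\[
N \oplus M_{0} \;\geq\; N \oplus M_{1} \;\geq\; N \oplus M_{2} \;\geq\; \cdots
\]
of submodules of $N \oplus M$. Since $N \oplus M$ has couniserial dimension, Proposition \ref{2.2} supplies an index $n \geq 1$ for which either $N \oplus M_{n}$ is uniform, or $N \oplus M_{n} \cong N \oplus M_{k}$ for all $k \geq n$. The first alternative can be ruled out immediately: assuming $N \neq 0$ (which holds when $N$ is cancellable, e.g.\ simple) and all $M_{i} \neq 0$, the submodules $N$ and $M_{n}$ are non-zero with $N \cap M_{n} = 0$ inside $N \oplus M_{n}$, contradicting uniformity. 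Hence the second alternative must hold, and applying the cancellation property of $N$ yields $M_{n} \cong M_{k}$ for every $k \geq n$.

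Now, for each $k \geq n$ fix an isomorphism $\varphi_{k} \colon M_{n} \to M_{k}$. Because $M_{k} \leq M_{n}$, composing $\varphi_{k}$ with the inclusion $M_{k} \hookrightarrow M_{n}$ produces an injective endomorphism of $M_{n}$. Since $M_{n}$ is a submodule of the fully coHopfian module $M$, this injective endomorphism must be surjective; hence its image, which is $M_{k}$, coincides with $M_{n}$. Thus $M_{n} = M_{n+1} = M_{n+2} = \cdots$, contradicting the strict descent of the chosen chain. Therefore no infinite strictly descending chain of submodules of $M$ can exist, and $M$ is artinian.

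The argument is short, but the one step that needs real care is ruling out the uniform alternative: it relies on both $N$ and the $M_{n}$ being non-zero. We may shrink the chain to avoid $M_{n} = 0$ (otherwise $M$ already has only finitely many distinct terms in every descending chain), and $N \neq 0$ follows from being cancellable in the relevant sense; once this is observed, the combination of Proposition \ref{2.2}, cancellation, and the fully coHopfian property of $M$ does all the work.
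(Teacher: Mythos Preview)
Your proof is correct and follows essentially the same route as the paper's: form the chain $N\oplus M_i$, invoke Proposition~\ref{2.2}, cancel $N$, and use that submodules of $M$ are coHopfian to turn the isomorphisms $M_n\cong M_k$ into equalities. The paper's version is terser---it suppresses both the uniform alternative and the explicit appeal to cancellation---while you spell these out. One small quibble: the zero module is trivially cancellable, so ``$N\neq 0$ follows from being cancellable'' is not literally true; but the intended hypothesis (and the example of a simple module) clearly has $N\neq 0$ in mind, and your argument is fine once that is assumed. Also, in a strictly descending infinite chain every $M_i$ is automatically non-zero, so the worry about $M_n=0$ is unnecessary.
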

\begin{proof}
 Let $M$ be fully coHopfian and let $M_1 \geq M_2 \geq ... $ be a descending chain of submodules of $M$. Then
$N \oplus M_1 \geq  N \oplus M_2 \geq ... $ is a descending chain of submodules of $N \oplus M$ and so for some $n$,
$M_i \cong M_n$ for each $i \geq n$. Now since $M$ is fully coHopfian, we have $M_i = M_n$ for each $i \geq n$.
 $~\square$  
 \end{proof}

Let us recall the definition of uniserial dimension \cite{j.algebra}.

\begin {definition} \label{uniserial dimension}
{\rm  In order to define uniserial dimension for modules over a ring $R$, we
first define, by transfinite induction, classes $ \zeta_\alpha $ of $R$-modules for all ordinals
$ \alpha \geq 1 $. To start with, let $ \zeta_1 $ be the class  of non-zero  uniserial modules.
Next, consider an ordinal $ \alpha > 1 $; if $ \zeta_\beta $ has been defined for all ordinals $ \beta < \alpha $,
let $ \zeta_\alpha $ be the class of those $R$-modules $M$ such that, for every submodule $N <  M$, where $M/N \ncong M$, we have
 $M/N \in \bigcup_{\beta < \alpha} \zeta_\beta$. If an $R$-module $M$ belongs to some $\zeta_\alpha$,
then the least such $\alpha$ is the
{\it uniserial dimension of} $M$, denoted u.s.dim$(M)$. For $M = 0$, we define u.s.dim$(M) = 0$.
 If $M$ is non-zero and $M$ does not
belong to any $\zeta_\alpha$, then we say that  ``u.s.dim$(M)$ is not defined,'' or that `` $M$ has no  uniserial dimension.''}
\end {definition}

\begin{remark}\label{semisimple eq}
{\rm Note that, in general,
 there is no relation between  the existence of   the uniserial dimension  and   the 
 existence of the couniserial dimension of a module. For example, the polynomial ring in infinite number of commutative 
 indeterminates 
 over a field $k$,  $R = k[x_1, x_2, ...]$ has this property that
  c.u.dim$(R_R) = 1$ but  $R_R$ does not have uniserial dimension (see 
 \cite [  Remark 2.3] {j.a.its}).
It follows by the definition that   a semisimple module $M$ has uniserial dimension if and only if $M$ has couniserial dimension, in 
which case
  u.s.dim$(M) = \alpha$ if and only if c.u.dim$(M) = \alpha$.
Furthermore a semisimple module   $M$ has couniserial dimension if and only if $M$ is a  finite direct sum of
  homogeneous semisimple modules ( see \cite [Proposition 1.18]{j.algebra}).}
\end{remark}

 Using the  above remark we have the following
interesting results.
\begin{corollary} \label{finite simple}
All right semisimple  modules over a ring $R$ have couniserial dimension if and only if
   there exist only finitely many non-isomorphic  simple right $R$-modules.
\end{corollary}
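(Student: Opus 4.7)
The plan is to invoke Remark \ref{semisimple eq} directly: a semisimple module has couniserial dimension if and only if it is a finite direct sum of homogeneous semisimple modules. With this in hand the proof is essentially a counting argument on isotypic components.

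For the ($\Leftarrow$) direction, suppose $R$ admits only finitely many pairwise non-isomorphic simple right modules, say $S_1,\ldots,S_n$. Any semisimple right $R$-module $M$ decomposes uniquely as $M=\bigoplus_{i=1}^{n}H_i$, where $H_i$ is the $S_i$-isotypic component of $M$ (equivalently, the sum of all submodules of $M$ isomorphic to $S_i$). Each $H_i$ is a direct sum of copies of $S_i$, hence homogeneous, so $M$ is a finite direct sum of homogeneous semisimple modules. By Remark \ref{semisimple eq}, $M$ has couniserial dimension.

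For the ($\Rightarrow$) direction I would argue by contraposition. Assume there exist infinitely many pairwise non-isomorphic simple right $R$-modules; pick a countably infinite such family $\{S_i\}_{i\geq 1}$ and set $M=\bigoplus_{i=1}^{\infty}S_i$. Then $M$ is semisimple. In any writing $M=\bigoplus_{j}H_j$ with each $H_j$ homogeneous, the simple summands of $H_j$ all lie in a single isomorphism class, and since $M$ contains simple submodules from infinitely many distinct isomorphism classes, at least one nonzero $H_j$ must appear for each class occurring. Hence such a decomposition requires infinitely many nonzero summands, so $M$ is \emph{not} a finite direct sum of homogeneous semisimple modules. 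By Remark \ref{semisimple eq}, $M$ does not have couniserial dimension, contradicting the hypothesis that every semisimple right $R$-module does.

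The only real point requiring care is the structural observation in the second direction, namely that if simples from infinitely many isomorphism classes appear in a semisimple module then the module cannot be rearranged into finitely many homogeneous blocks; this follows from the uniqueness of the isotypic decomposition of a semisimple module. Beyond this the argument is a direct application of the cited characterization.
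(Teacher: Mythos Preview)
Your proof is correct and follows exactly the approach the paper intends: the corollary is stated immediately after Remark \ref{semisimple eq} as a direct consequence of the characterization that a semisimple module has couniserial dimension if and only if it is a finite direct sum of homogeneous semisimple modules, and you have simply spelled out the isotypic-decomposition argument that makes this implication explicit in both directions.
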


\begin{lemma} \label{semisimple1}
  Suppose that $M$ is simple. Then $\oplus _{i = 1}^{\infty} E(M)$ has couniserial dimension  if and only if
  $M$ is injective.
\end{lemma}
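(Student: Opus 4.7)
The plan is to treat the two implications separately. For $(\Leftarrow)$, if $M$ is injective then $E(M) = M$, so $\oplus_{i=1}^{\infty} E(M) = \oplus_{i=1}^{\infty} M$ is a homogeneous semisimple module. By Remark \ref{semisimple eq}, a semisimple module has couniserial dimension if and only if it is a finite direct sum of homogeneous semisimples, and a single homogeneous component trivially meets this condition.

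For $(\Rightarrow)$, I would argue contrapositively: assume $M \subsetneq E(M) =: E$. Because $E$ is indecomposable injective and $M$ is a proper essential submodule, $M$ is not a direct summand of $E$ (else $M$ would itself be injective, forcing $M = E$); in particular $M \not\cong E$, since $M$ is simple whereas $E$ is not. Inside $\oplus_{i=1}^{\infty} E$ I would then introduce, for each $n \geq 1$, the submodule
$$X_n \;=\; \underbrace{M \oplus \cdots \oplus M}_{n \text{ copies}} \oplus E \oplus E \oplus \cdots,$$
obtaining a strictly descending chain $X_1 \supsetneq X_2 \supsetneq \cdots$. Each $X_n$ is non-uniform, because its first $M$-summand and its $(n+1)$-th $E$-summand are independent non-zero uniform submodules; so by Proposition \ref{2.2} it suffices to show that the $X_n$ are pairwise non-isomorphic in order to reach the desired contradiction.

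Showing $X_n \not\cong X_k$ for $n \neq k$ is the step I expect to be the main obstacle. The argument I have in mind is Krull--Schmidt--Azumaya: the simple module $M$ has a division ring as its endomorphism ring, and the indecomposable injective $E$ has a local endomorphism ring, so both are indecomposable with local endomorphism rings. Hence every indecomposable decomposition of $X_n$ is unique up to isomorphism and permutation, and since $M \not\cong E$ the multiplicity of $M$ in $X_n$ is exactly $n$, giving $X_n \not\cong X_k$ whenever $n \neq k$. Equivalently, using the cancellation property of simple modules recorded in Section~1, any isomorphism $X_n \cong X_{n+1}$ would reduce to $E^{(\omega)} \cong M \oplus E^{(\omega)}$, and Azumaya applied to $E^{(\omega)}$ excludes any simple summand on the right. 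Combined with the non-uniformity of the $X_n$, this violates Proposition \ref{2.2}, so $\oplus_{i=1}^{\infty} E(M)$ has no couniserial dimension, contradicting the hypothesis. Therefore $M = E(M)$ and $M$ is injective.
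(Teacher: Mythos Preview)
Your proof is correct, and the $(\Leftarrow)$ direction as well as the descending chain $X_n = M^{(n)} \oplus (\oplus_{i \geq n+1} E(M))$ for $(\Rightarrow)$ coincide exactly with the paper's setup. The divergence is only in how the contradiction (or conclusion) is extracted once Proposition~\ref{2.2} produces an isomorphism $X_n \cong X_{n+1}$.

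The paper does not argue contrapositively and does not invoke Krull--Schmidt--Azumaya. Instead, after cancelling the copies of $M$ (exactly as you do) to obtain $\oplus_{i = n+1}^{\infty} E(M) \cong M \oplus (\oplus_{i = n+2}^{\infty} E(M))$, it observes that the image of $M$ under this isomorphism, being cyclic, lies inside a finite subsum $E(M)^{k}$; the modular law then makes $M$ a direct summand of $E(M)^{k}$, hence injective. This is a more elementary finish: it uses only finite generation of $M$ and modularity, avoiding any appeal to local endomorphism rings or Azumaya's uniqueness theorem. Your route via Azumaya is perfectly valid and perhaps more conceptual---it identifies exactly why the multiplicity of $M$ distinguishes the $X_n$---but it imports stronger machinery than the statement requires.
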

\begin{proof}
  ($\Leftarrow$)  It is clear by the statement in Remark \ref{semisimple eq}. \\
 ($\Rightarrow$)  Consider the
   descending chain
  $$ M \oplus ( \oplus _{ i = 2}^{\infty}  E(M)) \geq  M^{(2)} \oplus ( \oplus _{ i = 3}^{\infty} E(M)) \geq ... $$
   of submodules of
  $\oplus _{i = 1}^{\infty} E(M)$ where $M^{(n)} = \oplus_{i = 1}^{\infty} M_i $ with  $M_1 = ... = M_n = M$
   and for each $i > n$, $M _i = 0$.
   Then, by Proposition \ref{2.2},
   there exists $n\geq 1$ such that
   $$M^{(n)} \oplus ( \oplus _{i = n + 1}^{\infty}E(M))\cong M^{(n + 1)}\oplus(\oplus _{i = n + 2}^{\infty} E(M))$$
    and so
    $\oplus _{i = n + 1}^{\infty} E(M) \cong M \oplus (\oplus _{i = n + 2}^{\infty} E(M))$, because $M$ is cancelable . Since $M$ 
    is cyclic, there exists a  right module $L$ such that  for some $k$, $E(M) ^{k} \cong M \oplus L $. This shows $M$ is injective. $~\square$
\end{proof}

\section{\hspace{-6mm}. Main Results.}

In this section we use our basic results to prove the main results.  

\begin{proposition} \label{simple1}
Let $M_R$ be an injective module  and $N_R$ be a cancellable module over a commmutative ring $R$
  (for example a simple module)
 such that $N \oplus M$ has couniserial dimension. Then $M$ is $\Sigma$-injective.
\end{proposition}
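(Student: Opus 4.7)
The plan is to argue by contradiction: assuming $M$ is not $\Sigma$-injective, I would produce a descending chain of non-uniform submodules of $N\oplus M$ that stabilizes neither to a uniform term nor up to isomorphism, contradicting Proposition~\ref{2.2}. The key external input will be Faith's criterion, which for an injective $R$-module $M$ characterizes $\Sigma$-injectivity by the ACC on $\{\mathrm{ann}_R(S) : S\subseteq M\}$. Because $R$ is commutative, all these annihilators are ideals and the standard Galois pairing between ideals of $R$ and submodules of $M$ is available.

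Suppose $M$ is not $\Sigma$-injective. Faith's theorem then supplies a strictly ascending chain of annihilator ideals $I_1\subsetneq I_2\subsetneq\cdots$, with $I_n=\mathrm{ann}_R(S_n)$ for suitable subsets $S_n\subseteq M$; since $I_n\ne R$, each $S_n$ contains a nonzero element. I would set $L_n:=\mathrm{ann}_M(I_n)=\{m\in M : I_n m=0\}$. Each $L_n$ is nonzero because $S_n\subseteq L_n$, and the $L_n$ form a descending chain of submodules of $M$ as the $I_n$ ascend.

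The crucial step is the double-annihilator identity $\mathrm{ann}_R(L_n)=I_n$. One inclusion is by definition of $L_n$; for the reverse, $S_n\subseteq L_n$ gives $\mathrm{ann}_R(L_n)\subseteq\mathrm{ann}_R(S_n)=I_n$. From this I would draw two consequences: the chain $L_n\supsetneq L_{n+1}$ is strict (otherwise $I_n=\mathrm{ann}_R(L_n)=\mathrm{ann}_R(L_{n+1})=I_{n+1}$), and moreover $L_n\not\cong L_{n+1}$ as $R$-modules, since isomorphic modules share the same annihilator, and equality of annihilators would again force $I_n=I_{n+1}$.

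Finally I would transfer the chain to $N\oplus M$: the descending chain $N\oplus L_1\geq N\oplus L_2\geq\cdots$ consists of non-uniform submodules, because for each $n$ both $N$ and $L_n$ are nonzero and sit as an internal direct sum inside $N\oplus M$. Applying Proposition~\ref{2.2} to the couniserial dimension of $N\oplus M$ produces an index $n$ with $N\oplus L_n\cong N\oplus L_{n+1}$, and cancellability of $N$ then yields $L_n\cong L_{n+1}$, contradicting the double-annihilator identity of the previous paragraph. I expect the main subtlety to lie in invoking Faith's criterion in precisely the form needed and executing the Galois-closure computation cleanly; once these are in place, the descending-chain step follows the template already used in Lemmas~\ref{example} and~\ref{semisimple1}.
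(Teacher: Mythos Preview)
Your proposal is correct and follows essentially the same route as the paper: both invoke Faith's ACC criterion for $\Sigma$-injectivity, pass from the chain of annihilator ideals $I_n$ to the descending chain $\mathrm{ann}_M(I_n)$, apply Proposition~\ref{2.2} together with cancellability of $N$, and then use the Galois-closure identity $\mathrm{ann}_R(\mathrm{ann}_M(I_n))=I_n$ to conclude. Your version is framed as a contradiction and is a bit more explicit (you verify that each $L_n\neq 0$ and that $N\oplus L_n$ is non-uniform, thereby ruling out the uniform branch of Proposition~\ref{2.2}), whereas the paper argues directly and leaves these points implicit; but the substance is the same.
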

\begin{proof}
 According to  \cite [Theorem 6.17]{cyclic}, it is enough to show that $R$ satisfies  the ascending chain condition on  ideals
  of
 $R$ that are annihilators of subsets of $M$. Let $I_1 \leq I_2 \leq ... $ be a chain of such annihilator ideals.
  Then for each $i$,
$M_i = $ ann$ _{M}(I_i) $
 is a submodule of $M$ and so we have  descending chain $N \oplus M_1 \geq N \oplus M_2 \geq ... $
of submodules of $N \oplus M$.  Then  there exists a positive integer  $n$ such that $M_n \cong M_i$ for all $i \geq n$. Thus
ann$(M_i) = $ ann$(M_n)  $. Therefore for each   $i \geq n$, $ I_i = I_n$. $~\square$
\end{proof}

\begin{remark}
 {\rm One can see that the above result  provides another proof for the fact that commutative
  V-rings (i.e, von Neumann regular rings ) are
$\Sigma$-V-ring. For an example of  right V-rings that is not $\Sigma$-V-ring, the reader may refer to 
\cite [ Example, page 60]{cyclic}.}  
\end{remark}
 
 The next result shows that if a module 
  has countable couniserial dimension then it can be decomposed into indecomposable modules.

 \begin{theorem}\label{decomposation}
   For an $R$-module $M$, if  {\rm c.u.dim}$(M) \leq \omega $, then $M$ has indecomposable decomposition.
  \end{theorem}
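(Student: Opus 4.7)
The plan is to split by whether u.dim$(M)$ is finite or infinite.

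Suppose first u.dim$(M) = n < \infty$, which by Lemma \ref{non} already covers every $M$ of finite couniserial dimension. I would then induct on $n$: the case $n = 1$ is the tautology that $M$ is uniform and hence indecomposable; for $n > 1$, if $M$ is indecomposable we are done, while if $M = A \oplus B$ with $A, B \neq 0$ then both summands have strictly smaller uniform dimension, so the inductive hypothesis expresses each as a finite direct sum of indecomposables.

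The substantive case is u.dim$(M) = \infty$, which by the contrapositive of Lemma \ref{non} together with c.u.dim$(M) \leq \omega$ forces c.u.dim$(M) = \omega$. Here I would pick an infinite independent family $\{V_i\}_{i=1}^{\infty}$ of non-zero submodules of $M$, apply Lemma \ref{uniform submodule} to each $V_i$ (which inherits couniserial dimension from $M$) to extract a uniform submodule $U_i \leq V_i$, and set $N = \bigoplus_{i=1}^{\infty} U_i$. Since u.dim$(N) = \infty$, Lemma \ref{non} forbids c.u.dim$(N)$ from being finite; combined with $N \leq M$ this forces c.u.dim$(N) = \omega$, matching c.u.dim$(M)$. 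Because $M$ is not uniform (having infinite uniform dimension), Remark \ref{1.3} then produces an isomorphism $N \cong M$, and transporting the decomposition $N = \bigoplus_i U_i$ across it gives an indecomposable decomposition of $M$, since uniform modules are indecomposable.

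The only delicate point I anticipate is the passage from a direct sum of uniforms that is merely a submodule of $M$ to an actual decomposition of $M$ itself; this is handled in a single stroke by Remark \ref{1.3}, which packages exactly the feature of couniserial dimension being exploited: a submodule attaining the full couniserial dimension of a non-uniform module must in fact be isomorphic to that module. Notably, despite the ordinal-valued setup, no further Zorn-type or transfinite construction is needed in either case.
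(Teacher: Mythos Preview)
Your argument is correct, and the core mechanism in the infinite case is exactly the one the paper uses: build $\bigoplus_i U_i$ with the $U_i$ uniform, observe its couniserial dimension must be $\omega$, and invoke Remark~\ref{1.3} to get $M\cong\bigoplus_i U_i$. The organizational route differs, however. The paper runs a single transfinite induction on $\alpha=\mathrm{c.u.dim}(M)$: given $M=N_1\oplus N_2$, either both summands have strictly smaller couniserial dimension (induction finishes), or one summand, say $N_1$, satisfies $\mathrm{c.u.dim}(N_1)=\alpha$, whence Remark~\ref{1.3} gives $M\cong N_1$; iterating this produces an infinite independent family inside $M$, from which uniform $K_i$ are extracted and $M\cong\bigoplus_i K_i$ as above. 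Your version instead separates by uniform dimension: the finite-$\mathrm{u.dim}$ case becomes an elementary induction on $\mathrm{u.dim}(M)$ that does not even require the couniserial hypothesis, while the infinite-$\mathrm{u.dim}$ case is handled directly without passing through the ``$M$ isomorphic to a proper summand'' step. Your split is arguably cleaner in that it isolates the only place where $\mathrm{c.u.dim}(M)\le\omega$ does real work; the paper's single induction is more uniform but hides the fact that the ``if not'' branch can only occur when $\alpha=\omega$.
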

  \begin{proof}
  The proof is by induction on c.u.dim$(M) = \alpha$. The case $\alpha = 1$  is clear. If $\alpha >  1$ and
  $M$ is not indecomposable, then $M = N_1 \oplus N_2$, where $N_1$ and $N_2$ are non-zero submodules of $M$.
  If c.u.dim$(N_i) < $ c.u.dim$(M)$,  $ i = 1, 2$, then by induction hypothesis $M$ has  indecomposable decomposition.
  If not, for definiteness let
   c.u.dim$(N_1) = $ c.u.dim$(M)$.  Then $M \cong N_1$, by Remark \ref{1.3}. Thus
    it contains an infinite direct sum of uniform modules, say
   $\oplus_{i = 1}^{\infty} K_i $. Clearly, c.u.dim$(\oplus_{i = 1}^{\infty} K_i ) \geq \omega$.
    Thus we have $M \cong \oplus_{i = 1}^{\infty} K_i $.
   $~\square$
\end{proof}

\begin{remark}
{\rm We do not know whether the above proposition holds for a module of arbitrary  couniserial dimension. 
For infinite countable couniserial dimension one can show under some condition that the module can be represented as a direct sum 
of
 uniform modules. }
\end{remark}

Recall that a module $M$ is called {\it Dedekind finite} if $M$ is not isomorphic to
any proper direct summand of itself. Clearly, every direct summand of a Dedekind finite module is a Dedekind finite module. 
Obviously, a Hopfian module is Dedekind finite. 
 Since  all finitely generated modules over a commutative ring are Hopfian (see \cite{good}),
  they provide  examples of Dedekind finite modules.
   
\begin{theorem} \label{dedekind}
If $M$ is a Dedekind finite module with couniserial dimension, then $M$ has finite indecomposable decomposition. 
\end{theorem}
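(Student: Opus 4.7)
The plan is to prove this by transfinite induction on $\alpha = \text{c.u.dim}(M)$. The base case $\alpha = 1$ is immediate: then $M$ is uniform, and every uniform module is indecomposable (a non-trivial direct sum decomposition would force two submodules with zero intersection), so $M$ itself is its own finite indecomposable decomposition.

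For the inductive step, suppose $\alpha > 1$ and the result holds for every Dedekind finite module whose couniserial dimension is strictly less than $\alpha$. If $M$ is indecomposable we are done. Otherwise write $M = N_1 \oplus N_2$ with $N_1, N_2$ non-zero. Since the decomposition is non-trivial, $M$ is not uniform, so Remark \ref{1.3} applies: if $\text{c.u.dim}(N_i) = \text{c.u.dim}(M)$ for some $i$, then $N_i \cong M$. But then $M$ would be isomorphic to a proper direct summand of itself (the complement $N_{3-i}$ is non-zero), contradicting the Dedekind finiteness of $M$. Hence $\text{c.u.dim}(N_i) < \alpha$ for both $i = 1, 2$.

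Each $N_i$, being a direct summand of the Dedekind finite module $M$, is itself Dedekind finite. By the induction hypothesis each $N_i$ admits a finite decomposition into indecomposable summands, and concatenating these gives a finite indecomposable decomposition of $M = N_1 \oplus N_2$.

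The only real content of the argument is the observation that Dedekind finiteness rules out the bad case of Theorem \ref{decomposation}'s proof, namely $M \cong N_1$ with $N_2 \neq 0$; this is exactly what lets the induction descend even past $\omega$. I do not anticipate any technical obstacle — the transfinite induction handles successor and limit ordinals uniformly because we only use that every non-trivial decomposition of $M$ has both summands of strictly smaller couniserial dimension, not the particular ordinal structure below $\alpha$.
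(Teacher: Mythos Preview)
Your proof is correct and follows essentially the same approach as the paper's: transfinite induction on $\alpha = \text{c.u.dim}(M)$, using Dedekind finiteness to ensure that in any non-trivial decomposition $M = N_1 \oplus N_2$ neither summand is isomorphic to $M$, whence Remark~\ref{1.3} gives $\text{c.u.dim}(N_i) < \alpha$ and the induction hypothesis applies. Your write-up is more explicit about why $N_i \ncong M$ and why each $N_i$ inherits Dedekind finiteness, but the argument is the same.
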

\begin{proof}
The proof is by induction on c.u.dim$(M) = \alpha$. The case $\alpha = 1$ is clear. Let $\alpha > 1$ and every
 Dedekind finite module with c.u.dim less than $\alpha$ be decomposed to finitely many 
   indecomposable modules. If $M$ is not indecomposable,
 then $M = M_1 \oplus M_2$. Since $M_i \ncong M$, using Remark \ref{1.3}, c.u.dim$(M_i) <  $ c.u.dim$(M)$ and so, 
 by induction hypothesis, $M_i$ have finite indecomposable decomposition. This completes the proof.    $~\square$
\end{proof}

A ring $R$ is called a {\it von Neumann regular ring} if for each $x \in  R$, there exists
$y \in R$ such that $xyx = x$, equivalently, every principal right ideal is a direct summand.  $R$ is 
 {\it unit regular ring} if for each $x \in  R,$ there exists a unit element $u \in R$
such that $x = xux.$  As a consequence of the above theorem we have the following corollary. 

\begin{corollary}
Every Dedekind finite  von Neumann regular ring (in particular, unit regular rings )
 with couniserial dimension is semisimple artinian.
\end{corollary}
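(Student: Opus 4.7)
The plan is to reduce the problem to showing that every indecomposable direct summand of $R_R$ is simple, after which semisimplicity is automatic. First I would apply Theorem \ref{dedekind} to the right module $R_R$; this requires that $R_R$ itself be Dedekind finite as a module. If instead $R_R\cong eR$ for some idempotent $e\neq 1$, then tracking the isomorphism $\phi\colon R\to eR$, with $y=\phi(1)$ and $a=\phi^{-1}(e)$, one computes $ay=\phi^{-1}(y)=1$ and $ya=\phi(a)=e\neq 1$, contradicting ring-theoretic Dedekind finiteness of $R$. So Theorem \ref{dedekind} produces a finite indecomposable decomposition $R_R=M_1\oplus\cdots\oplus M_n$, and a standard Peirce-decomposition argument (comparing the components of $1=\sum m_i$) gives orthogonal idempotents $e_1,\dots,e_n$ summing to $1$ with $M_i=e_iR$.

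The heart of the proof is to show that each indecomposable $e_iR$ is actually simple. I would fix any nonzero $x\in e_iR$. By von Neumann regularity $xR$ is a direct summand of $R_R$, say $R=xR\oplus K$ for some right ideal $K$. Since $xR\subseteq e_iR$, the modular law yields
$$e_iR=e_iR\cap(xR+K)=xR+(e_iR\cap K),$$
and $xR\cap(e_iR\cap K)\subseteq xR\cap K=0$, so the sum is direct: $e_iR=xR\oplus(e_iR\cap K)$. Indecomposability of $e_iR$, together with $xR\neq 0$, then forces $xR=e_iR$. Hence every nonzero element of $e_iR$ generates $e_iR$, so $e_iR$ is simple. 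An alternative (and conceptually cleaner) route is to observe that $\mathrm{End}_R(e_iR)\cong e_iRe_i$ is a corner of a von Neumann regular ring with only trivial idempotents, hence a division ring, and to extract the simplicity of $e_iR$ from that observation.

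Once each $e_iR$ is simple, $R_R$ is a finite direct sum of simple modules, so $R$ is semisimple artinian. The unit regular case is subsumed because every unit regular ring is von Neumann regular and Dedekind finite. The main point to isolate is the modular-law trick that upgrades ``$xR$ is a direct summand of $R$'' to ``$xR$ is a direct summand of $e_iR$''; once that is in hand, the indecomposable decomposition supplied by Theorem \ref{dedekind} together with von Neumann regularity makes the conclusion immediate.
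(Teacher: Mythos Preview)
Your proof is correct and follows exactly the route the paper intends: the corollary is stated without proof, as an immediate consequence of Theorem~\ref{dedekind}, and you have simply filled in the standard details (ring-theoretic Dedekind finiteness is equivalent to module-theoretic Dedekind finiteness of $R_R$; in a von Neumann regular ring every indecomposable cyclic projective is simple via the modular-law argument you gave). Nothing further is needed.
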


A ring $R$ is called a PWD {\it (piecewise domain)} if it
possesses a complete set $ \lbrace e_{i}  \vert 0 \leq  i  \leq n  \rbrace$ of orthogonal idempotents such that $xy = 0$
implies $x = 0 $ or $y = 0$ whenever $x \in  e_i Re_k$ and $y \in  e_k Re_j $. Note that the
definition is left-right symmetric and all  $e_i R e_i$ are domain, see \cite{lance small}. 

An element $x$ of $R$ is called regular if its right and left annihilators are zero.

\begin{proposition} \label{semiprime right Goldie}
Let  $R$ be a semiprime right Goldie ring  with couniserial dimension. If u.dim$(R_R) = n$, then $R$ has a decomposition 
into 
$n$ uniform modules. In particular, it is a piecewise domain.     
\end{proposition}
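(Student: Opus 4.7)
The argument will combine Goldie's theorem with Theorem~\ref{dedekind}. Since $R$ is semiprime right Goldie, Goldie's theorem gives a classical right ring of quotients $Q=Q_{cl}(R)$ which is semisimple artinian, with $\mathrm{u.dim}(Q_Q)=\mathrm{u.dim}(R_R)=n$. Because $Q$ is artinian it is Dedekind finite as a ring, and any right inverse in $R$ computed in $Q$ automatically lies back in $R$; hence $R$ is Dedekind finite as a ring and $R_R$ is a Dedekind finite module. Combined with the hypothesis that $R$ has couniserial dimension, Theorem~\ref{dedekind} supplies a finite indecomposable decomposition $R_R = e_1R\oplus\cdots\oplus e_kR$, where $\{e_1,\ldots,e_k\}$ is a complete set of orthogonal primitive idempotents of $R$.

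The essential step is to show that each $e_iR$ is uniform. Since $R_R$ is essential in $Q_R$, the right ideal $e_iR$ is essential in $e_iQ$, and the latter is a direct summand of the semisimple module $Q_Q$. Write $e_iQ = V_1\oplus\cdots\oplus V_m$ as a direct sum of minimal right ideals of $Q$ via primitive orthogonal idempotents $f_1,\ldots,f_m\in Q$ summing to $e_i$. Each $I_j:=e_iR\cap V_j$ is then a uniform right ideal of $R$ contained in $e_iR$, and $\bigoplus_{j}I_j$ is essential in $e_iR$. If $m\ge 2$, I plan to exhibit, inside the indecomposable module $e_iR$, a strictly descending chain of pairwise non-isomorphic non-uniform $R$-submodules in the spirit of the proof of Lemma~\ref{example}: alternately shrink one uniform component of $\bigoplus_j I_j$ and use the cancellation property of the remaining uniform components to distinguish consecutive terms up to $R$-isomorphism, thereby contradicting Proposition~\ref{2.2}. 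Hence $m=1$ and $e_iR$ is uniform, whence summing uniform dimensions forces $k=n$ and yields the required decomposition of $R$ into $n$ uniform right modules.

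For the piecewise domain conclusion, with $\{e_1,\ldots,e_n\}$ as the complete orthogonal family of primitive idempotents, the plan is: for $x\in e_iRe_k$ and $y\in e_kRe_j$ with $xy=0$ and $y\ne 0$, the right ideal $yR\subseteq e_kR$ is non-zero and lies inside $r.\mathrm{ann}_R(x)\cap e_kR$, so the left-multiplication map $e_kR\to e_iR$, $r\mapsto xr$, has non-zero kernel inside the uniform right ideal $e_kR$; the semiprime right Goldie property (no non-zero nilpotent one-sided ideals, essential right ideals contain regular elements) together with the uniform-ness of $e_iR$ then forces $x=0$, establishing the piecewise-domain condition.

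The principal obstacle is the indecomposable-implies-uniform step. The decomposing idempotents $f_j$ of $e_iQ$ lie in $Q\setminus R$ when $m\ge 2$, so the requisite chain of submodules of $e_iR$ must be built entirely inside $R$ while remaining genuinely pairwise non-isomorphic as $R$-modules; executing the cancellation-based distinguishing argument at every step, so that couniserial dimension may finally be invoked via Proposition~\ref{2.2}, is where the delicate work of the proof lies.
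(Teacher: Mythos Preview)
Your route differs substantially from the paper's, and the step you yourself flag as delicate does not go through. The paper never tries to show that an indecomposable summand of $R_R$ is uniform. Instead it builds, directly inside $R$, a single descending chain $I_1 \geq J_1 \geq I_2 \geq J_2 \geq \cdots$ where each $I_k$ is a direct sum of $n$ uniform right ideals and each $J_k$ is isomorphic to $R$: one starts with any essential $I_1 = U_1 \oplus \cdots \oplus U_n$, uses that in a semiprime right Goldie ring every essential right ideal contains a regular element $x$ to set $J_1 = xR \cong R$, finds an essential $I_2 \leq J_1$ that is again a direct sum of $n$ uniforms, and iterates. Since none of these modules is uniform when $n>1$, Proposition~\ref{2.2} forces some tail to be mutually isomorphic, so $R \cong J_k \cong I_k$ and $R$ itself decomposes as a direct sum of $n$ uniform right ideals. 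The piecewise-domain conclusion is then simply quoted from \cite{lance small}.

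Your plan, by contrast, hinges on showing that each indecomposable $e_iR$ is uniform by constructing inside it a descending chain with pairwise non-isomorphic non-uniform terms, and for this you invoke ``the cancellation property of the remaining uniform components'' $I_2,\ldots,I_m$. That property is not available in general: the endomorphism ring of a uniform right ideal in a semiprime right Goldie ring is merely a right Ore domain (cf.\ \cite[Lemma~5.12]{5}), not a division ring and typically not semilocal, so the cancellation criterion you are implicitly relying on (the one that makes Lemma~\ref{example} work for a \emph{simple} $S$) does not apply. Without cancellation you have no invariant distinguishing $I_1 \oplus I_2 \oplus \cdots \oplus I_m$ from $I_1' \oplus I_2 \oplus \cdots \oplus I_m$, and the intended contradiction with Proposition~\ref{2.2} evaporates. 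Bear in mind that Remark~\ref{example of prime right Goldie} exhibits a prime right Goldie ring that is indecomposable as a right module over itself yet has uniform dimension $2$; so ``indecomposable $\Rightarrow$ uniform'' genuinely fails without the couniserial hypothesis, and the paper's alternating chain exploits that hypothesis far more directly than your decompose-then-refine strategy.
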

\begin{proof}
We can assume that $n > 1$. 
Let $I_1 = U_1 \oplus ... \oplus U_n$ be an essential right ideal of $R$.  Then, by  \cite [Proposition 6.13]{goodearl},
$I_1$ contains a regular element $x$ and thus $J_1 = xR$ is a right ideal of $R$ which is $R$-isomorphic to $ R$. 
So u.dim$(J_1) = n$ and it contains an essential  right ideal $I_2$ of $R$ such that it  is a 
 direct sum of $n$ uniform right ideals. 
  By continuing in  this manner we obtain a descending chain $ I_1 \geq J_1 \geq I_2 \geq ...$ of right ideals of 
  $R$ such that $I_i$ are direct sum of 
$n$ uniform and $J_i$ are isomorphic to   $R$. Since $R$ has couniserial dimension, for some $n$, $ I_n \cong R$.  The last 
statement follows from \cite [Pages 2-3]{lance small}. This 
completes the proof. $~\square$
\end{proof}

\begin{remark} \label{example of prime right Goldie}
{\rm 
There  exists an example of simple noetherian ring of uniform dimension $2$ which has no
non-trivial idempotents (c.f. \cite [Example 7.16, page 441 ] {robson}). 
So by the above proposition this provides an   example   of prime right Goldie ring without couniserial dimension.
  }
\end{remark}

\begin{lemma}\label{Q-map}
Let $R$ be a right non-singular ring with    maximal right 
quotient ring $Q$. Let $M$ be  a $Q$-module. If $M$ is non-singular $R$-module,
such that  $M_R$ has couniserial dimension, then 
 $M_Q$ has couniserial dimension.  
\end{lemma}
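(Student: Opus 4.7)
The plan is to verify the descending chain characterization of couniserial dimension given in Proposition \ref{2.2}. Take an arbitrary descending chain $N_1 \geq N_2 \geq \cdots$ of $Q$-submodules of $M$. Each $N_i$ is also an $R$-submodule, so since $M_R$ has couniserial dimension, Proposition \ref{2.2} produces an index $n$ such that either (i) $N_n$ is uniform as an $R$-module, or (ii) $N_n \cong N_k$ as $R$-modules for every $k \geq n$. My goal will then be to upgrade each alternative to its $Q$-module counterpart and conclude via Proposition \ref{2.2} applied to $M_Q$.

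Case (i) is immediate: every $Q$-submodule of $N_n$ is in particular an $R$-submodule, so the uniformity of $N_n$ as an $R$-module transfers verbatim to the $Q$-module structure.

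For (ii), the decisive ingredient will be the following auxiliary statement: every $R$-linear map $f \colon N \to N'$ between $Q$-submodules $N, N'$ of $M$ is automatically $Q$-linear. To prove this, fix $x \in N$ and $q \in Q$. Because $R$ is right non-singular and $Q = Q_{\max}(R)$, the set $I = \{\, r \in R : qr \in R \,\}$ is an essential right ideal of $R$. For each $r \in I$ the chain of equalities
\[
f(xq)\cdot r \;=\; f(xq\cdot r) \;=\; f(x\cdot qr) \;=\; f(x)\cdot (qr) \;=\; (f(x)\,q)\cdot r,
\]
obtained from $R$-linearity of $f$ together with the compatibility of the $Q$-action and the $R$-action on $M$, shows that $y := f(xq) - f(x)\,q \in M$ is annihilated by $I$. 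Since $I$ is essential in $R_R$ and $M_R$ is non-singular, $y \in Z(M_R) = 0$, so $f(xq) = f(x)\,q$. Applied to an $R$-isomorphism $N_n \cong N_k$, this lifts the isomorphism to a $Q$-isomorphism, and Proposition \ref{2.2} then certifies that $M_Q$ has couniserial dimension.

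The main obstacle is precisely the auxiliary lemma just described; it is the only place where the two non-singularity hypotheses (on $R$ and on $M_R$) get used, and without it the argument collapses, since a priori an $R$-isomorphism between $Q$-submodules need not respect the $Q$-action. Once this $R$-linear $\Rightarrow$ $Q$-linear promotion is secured, the rest of the proof is a direct transfer of the chain condition through Proposition \ref{2.2}.
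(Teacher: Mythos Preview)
Your proof is correct and follows essentially the same route as the paper: reduce to the chain criterion of Proposition~\ref{2.2}, note that uniformity transfers trivially, and in the isomorphism case show that an $R$-linear map between $Q$-submodules of $M$ is automatically $Q$-linear by pulling $q$ back to $R$ along an essential right ideal and invoking non-singularity. Your write-up is in fact slightly cleaner in one respect: you correctly attribute the vanishing of $y=f(xq)-f(x)q$ to the hypothesis $Z(M_R)=0$, whereas the paper's phrasing appeals to $Q$ being right non-singular at that point.
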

\begin{proof} 
  {\rm    Let $M \geq M_1 \geq M_2 \geq ...$ be a descending chain of $Q$-submodules 
of $M$. So it is a descending chain of 
$R$-submodules of $M$ and thus, for some $n$, $M_n$ is uniform $R$-module or 
$M_n  \cong M_i$ as $R$-modules for all $i \geq n$. If $M_n $ is uniform $R$-module, then it is also
 uniform $Q$-module. So let  
$M_n \cong M_i$ as $R$-modules and let  
  $\varphi_{i}$ be  this isomorphism. 
   If $q \in Q$ and
$t \in M_n$
there exists an essential right ideal $E$ of $R$ such that $qE \leq R$. 
So $\varphi_{i} (tqE) = \varphi_{i} (tq) E $ and also $ \varphi_{i} (tqE)  = \varphi_{i} (t) qE$. 
Then $\varphi_{i} (tq) E = \varphi_{i} (t) qE$.
 Since  $Q$ is
right non-singular,  $\varphi_{i} (tq) = \varphi_{i} (t)q$.  Thus  $\varphi_{i}$ is a $Q$-isomorphism.
This completes the proof. $~\square$} 
\end{proof}

A ring  $R$ is semiprime (prime) right Goldie ring if and only if its   maximal right quotient ring  is  
 semisimple (simple) artinian ring,  \cite [Theorems 3.35 and 3.36]{Gooderlnonsingular}.
 Semiprime right Goldie rings are non-singular. 
  A right non-singular ring $R$ is semiprime right Goldie ring if and only if u.dim$(R_R)$ is
   finite,  \cite [Theorem 3.17]{Gooderlnonsingular}. Recall that a {\it right full linear ring} is the ring of all linear transformations
(written on the left) of a right vector space over a division ring. If the dimension of the vector space is finite, a right full linear ring is 
exactly a simple artinian ring.

\begin{theorem}\label{2.5}
Let $R$  be a  right non-singular ring with  
maximal right quotient ring, $Q$.  If $Q$ as an  $R$-module  has couniserial dimension,
 then $R$ is a semiprime right Goldie ring which  is a finite product of prime Goldie rings, each of which is 
 a piecewise domain.  
\end{theorem}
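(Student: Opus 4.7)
The plan is to reduce the statement to the semisimplicity of the self-injective von Neumann regular ring $Q$, then feed the result into the earlier structural propositions. Since $R$ is right non-singular, $Q = E(R_R)$ is a non-singular $R$-module, so Lemma \ref{Q-map} upgrades the hypothesis to: $Q_Q$ has couniserial dimension. The central goal is then to show $Q$ is semisimple artinian, whence by the standard characterisation for non-singular rings cited above it follows that $R$ is semiprime right Goldie.

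To see this, note that $Q$ is right self-injective VNR, so any uniform cyclic right ideal has the form $eQ$ with $eQe$ a division ring and is therefore simple. Lemma \ref{uniform submodule} applied to $Q_Q$ produces such a simple right ideal, and a complement argument (any complement of $\mathrm{Soc}(Q_Q)$ in $Q_Q$ would itself contain a simple submodule by the same reasoning) shows $\mathrm{Soc}(Q_Q)$ is essential in $Q$. To force it finitely generated, suppose for contradiction that there are orthogonal primitive idempotents $e_1, e_2, \ldots$ with each $e_iQ$ simple, set $F_n = e_1 + \cdots + e_n$, and examine the strictly descending chain $N_n = (1-F_n)Q$, no term of which is uniform. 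Proposition \ref{2.2} forces $N_n \cong N_k$ for all $k \geq n$ and some $n$. If the classes $\{e_iQ\}$ realise infinitely many isomorphism types, the Krull--Schmidt uniqueness of the decomposition of the injective modules $N_n$ into simple direct summands contradicts this at once (isotypes present in $N_n$ disappear in $N_k$ as $n < k$). The single-isotype sub-case (all $e_iQ \cong S$) is the main obstacle: here one combines Lemma \ref{semisimple1} applied to $S^{(\mathbb{N})} \leq Q$ with the essentiality of the socle and the fine structure of right self-injective VNR rings to rule it out, concluding that $Q = \mathrm{Soc}(Q_Q)$ is semisimple artinian.

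With $Q$ semisimple artinian and $R_R$ inheriting couniserial dimension from $Q_R$, Proposition \ref{semiprime right Goldie} yields both the decomposition of $R$ into $n = \mathrm{u.dim}(R_R)$ uniform right ideals and the piecewise domain property. Writing $Q = Q_1 \times \cdots \times Q_k$ with the $Q_i$ simple artinian and with central idempotents $\pi_i \in Q$, one finishes by showing $\pi_i \in R$, so that $R = R\pi_1 \times \cdots \times R\pi_k$ is a ring product and each $R\pi_i = R \cap Q_i$ is prime right Goldie, inheriting the PWD structure from $R$. This last step uses that the injective hull in $Q$ of each uniform summand $U_j \leq R$ lies in a unique $Q_i$; grouping the $U_j$ accordingly produces $R$-module direct summands of $R$ whose orthogonal sum is $R$, and which by essentiality of $R$ in $Q$ and semiprimality of $R$ must be ring summands cut out by the $\pi_i$. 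The main technical obstacle throughout is the single-isotype sub-case in the second paragraph, where the chain $N_n$ trivially satisfies Proposition \ref{2.2} and one must invoke the specific structure of $\mathrm{End}(S^{(\mathbb{N})})$-type rings to derive a contradiction.
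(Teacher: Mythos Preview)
Your reduction to showing $Q$ is semisimple artinian is sound, and your final paragraph (pulling the central idempotents $\pi_i$ back into $R$ via the decomposition $R=\bigoplus U_j$ from Proposition~\ref{semiprime right Goldie}) is a clean way to get the ring product---the paper instead quotes \cite[Corollary 3]{lance small} for that step. The essential-socle argument is also fine. The genuine gap is exactly where you flag it: the infinite-multiplicity (``single-isotype'') case. If some simple $S$ occurs with infinite multiplicity in $\mathrm{Soc}(Q_Q)$, then your chain $N_n=(1-F_n)Q$ satisfies $N_n\cong N_k$ for all $n,k$ (indeed $N_n\cong Q$), so Proposition~\ref{2.2} yields nothing. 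Your proposed rescue via Lemma~\ref{semisimple1} does not work: that lemma says $\bigoplus_{i=1}^{\infty}E(S)$ has couniserial dimension \emph{iff} $S$ is injective, and here $S$ \emph{is} injective (it is a summand of $Q$), so the lemma is consistent with the hypothesis rather than contradicting it. The phrase ``fine structure of right self-injective VNR rings'' is not a proof; nothing you have written excludes $Q$ being a full linear ring on an infinite-dimensional space.

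The paper's argument is organised precisely to dispose of this case. It first invokes the structure theorem $Q=\prod_{i\in I}Q_i$ with each $Q_i$ a right full linear ring, and then builds a different descending chain, alternating between \emph{cyclic} and \emph{non-cyclic} right ideals of $Q$. If some $Q_j$ is over an infinite-dimensional space then $Q_j\cong Q_j\times Q_j$ and $Q_j$ contains a non-cyclic right ideal (its socle), so one finds $Q> K_1> T_1> K_2>\cdots$ with each $T_m$ cyclic and each $K_m$ non-cyclic; since no $K_m$ can be isomorphic to any $T_\ell$ and none is uniform, Proposition~\ref{2.2} is violated. A parallel cyclic/non-cyclic chain handles the case $|I|=\infty$. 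This device---distinguishing terms of the chain by a crude invariant (cyclic vs.\ not) that isomorphism must preserve---is the missing idea in your attempt, and your socle-multiplicity bookkeeping cannot substitute for it when multiplicities are infinite.
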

 \begin{proof}
 It is enough to show that $R$ has finite uniform dimension.
 Since $Q_R$ has couniserial dimension, $R_R$ has couniserial dimension and so every right ideal of $R$ has 
 couniserial dimension. Thus   Lemma \ref{uniform submodule} implies that every right ideal contains a uniform submodule. 
 Now by  
\cite [Theorem 3.29] {goodearl}  the   maximal right  quotient ring  of
 $R$  is a product of right full linear rings, say $Q = \prod  _{ i \in I} Q_{i}$, where $Q_{i}$ are right full linear rings. Note that 
 since $R_R$  is  right non-singular, $Q_R$ is also  non-singular and so,  using  Lemma \ref{Q-map}, $Q_Q$ has
  couniserial dimension. 
 At first we claim each $Q_{i}$ is endomorphism ring of a finite dimensional vector space. Assume the contrary. Then 
 $Q_{j}$ is  the  endomorphism ring of an infinite dimensional vector space, for some $j$. Thus  $Q_{j} \cong  Q_{j} \times Q_{j} $ and so 
 if $\iota: Q_{j} \longrightarrow Q$ be the canonical embedding, then $\iota ( Q_{j})$ is a right  ideal of $Q$ and  there exists a
  $Q$-isomorphism
 $Q \cong \iota ( Q_{j})  \times Q$.  Then
  there exist right ideals $T_1$ and $T$ of $Q$ such that $Q = T_1 \oplus T$,    $T_1$ and $Q$ are isomorphic  as
 $Q$-modules and $T \cong  \iota ( Q_{j})$ as $Q$-module. Because 
 $Q_{j}$   is the  endomorphism ring of an  infinite dimensional vector space, it has a right ideal which is not principal, for example
 its socle. So $\iota ( Q_{j})$ and thus $T$ contains a non-cyclic right ideal of $Q$ and thus since $T \cong Q/T_1$, 
 there exists a non-cyclic right ideal of $Q$, say $K_1$ such that $ Q \geq K_1 \geq T_1 $.
  Now $T_1 $ is isomorphic to $Q$. So
  we can have a descending chain
$Q >    K_1 > T_1 > K_2 > T_2 > ... $ of right ideals of $Q$ such that $T_i$ are cyclic but $K_i$ are not
 cyclic. This is a contradiction. So all $Q_i$ are 
 endomorphism ring of  finite dimensional vector spaces. Now to show $R$ is semiprime right Goldie ring it is enough to show
 that   
 the index set $I$ is finite. 
 If $I$ is infinite,  there exist infinite subsets $I_1 $ and $I_2$ of $I$ such that $I = I_1 \cup I_2$.
 and $I_1\cap I_2 $ is empty. Let $T_1 = \prod _{i \in I} N_i$ such that $N_i = Q_i$ for all $i \in I_1$ and 
 $N_i = 0$ for all $i \in I_2$. Similarly let
 $ T = \prod _ {i \in I} M_i$ such that $M_i = Q_i$ for all $i \in I_2$ and 
 $M_i = 0$ for all $i \in I_1$.  Then $T_1$ and $T$ are right ideals of $Q$ and $Q = T_1 \oplus T$.
 $T$ contains a right ideal of $Q$ which is not cyclic, for example $\oplus_{i \in I} M_i$. Since $T \cong Q/ T_1$, there exists a
  non-cyclic right ideal $K_1$ of $Q$ such that 
 $Q \geq K_1 \geq T_1$. Note that $T_1$ is a cyclic $Q$-module and because $I_1$ is infinite, the structure of $T_1$ is 
 similar to that of $Q$.  We can continue in this  manner and 
 find a descending chain of right ideals of $Q$ such that $K_i$ are non cyclic $Q$-modules and $T_i$ are cyclic $Q$ modules,
 which is a contradiction.   
Therefore $I$ is finite and $R$ must have finite uniform dimension. This  shows $R$ is semiprime right  Goldie ring and so
Proposition \ref{semiprime right Goldie}
and \cite [ Corollary 3]{lance small} imply that it is a direct sum of prime right Goldie rings. $~\square$

 \end{proof}

 The reader may ask what if $R_R$ has couniserial dimension instead of $Q_R$.  
Indeed we may point out that unlike a semiprime ring with right Krull dimension, a semiprime ring
with couniserial dimension need not be  a right Goldie ring. See Dubrovin \cite{Uniserial with nil} 
that contains an example of a primitive uniserial ring with non-zero nilpotent elements.  \\

Next we show  that the converse of the above theorem is not true, in general. In fact we show that there exists a prime right Goldie ring $R$
such that c.u.dim$(R_R) = 2$ and  $Q _R$ does not have couniserial dimension. We need the following lemma to give the
 example.   

\begin{lemma}\label{morita}
For an ordinal number $\alpha$, being of couniserial dimension $\alpha$ is a Morita invariant property for modules.
\end{lemma}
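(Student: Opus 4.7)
The plan is a transfinite induction on $\alpha$. Fix a Morita equivalence $F$ from the category of right $R$-modules to the category of right $S$-modules, with quasi-inverse $G$. Two standard facts about $F$ will be used throughout: first, for any right $R$-module $M$, the functor $F$ induces a lattice isomorphism from the submodule lattice of $M$ onto the submodule lattice of $F(M)$, sending the zero submodule to zero and non-zero submodules to non-zero submodules; second, for submodules $N_1, N_2 \leq M$, one has $N_1 \cong N_2$ as $R$-modules if and only if $F(N_1) \cong F(N_2)$ as $S$-modules.

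For the base case $\alpha = 1$, uniformity is expressible purely in the submodule lattice: a module is uniform if and only if it is non-zero and any two of its non-zero submodules have non-zero intersection. By the lattice isomorphism above, $M$ is uniform if and only if $F(M)$ is uniform, so {\rm c.u.dim}$(M) = 1$ if and only if {\rm c.u.dim}$(F(M)) = 1$.

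For the inductive step, assume $\alpha > 1$ and that {\rm c.u.dim}$(N) = {}${\rm c.u.dim}$(F(N))$ whenever either side is less than $\alpha$. Suppose {\rm c.u.dim}$(M) = \alpha$. Given any non-zero submodule $N'$ of $F(M)$ with $N' \ncong F(M)$, write $N' = F(N)$ for the unique non-zero submodule $N \leq M$ corresponding to $N'$ under the lattice isomorphism; by the second fact above, $N \ncong M$, so {\rm c.u.dim}$(N) < \alpha$, and by the induction hypothesis {\rm c.u.dim}$(N') = {}${\rm c.u.dim}$(N) < \alpha$. Hence {\rm c.u.dim}$(F(M)) \leq \alpha$. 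Applying the same argument to the Morita equivalence $G$ at the module $F(M)$ gives {\rm c.u.dim}$(M) \leq {}${\rm c.u.dim}$(F(M))$, so the two dimensions coincide.

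The argument is essentially a mechanical unwinding of the transfinite definition against well-known properties of an equivalence of module categories, so there is no serious technical obstacle. The only point requiring care is matching the clause ``$N \ncong M$'' in the definition: one needs both the lattice isomorphism (to transport submodules bijectively) and the preservation/reflection of module isomorphism (to transport the non-isomorphic condition) in the same step.
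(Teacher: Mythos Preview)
Your proof is correct and takes essentially the same approach as the paper: the paper's proof is a one-line appeal to the definition together with the standard fact (Anderson--Fuller, Proposition~21.7) that a category equivalence induces a lattice isomorphism on submodules and preserves isomorphism, and your transfinite induction is precisely the explicit unwinding of that appeal. The only minor point is that the last sentence of your inductive step (applying the argument to $G$) could be stated more directly by noting that if $\mathrm{c.u.dim}(F(M)) = \beta < \alpha$ then the induction hypothesis already forces $\mathrm{c.u.dim}(M) = \beta$, a contradiction.
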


\begin{proof}
This is clear by  the definition of couniserial dimension  and \cite [ Proposition 21.7 ]{Anderson}. $~\square$
\end{proof}

\begin{example}
{\rm Here we give an example of a prime right Goldie ring $R$ with maximal right 
 quotient  ring $Q$ such that $Q_R$ does not have couniserial dimension. Take $R = M_2 (\Bbb{Z})$, the  $2 \times 2$ matrix ring over $\Bbb{Z}$.
  Then $R$ is a prime right Goldie ring with maximal right quotient ring 
$Q = M_2(\Bbb{Q})$.  Note that under the standard Morita equivalent between the ring 
$\Bbb{Z}$ and  $R= M_2(\Bbb{Z})$, see \cite [Theorem 17.20 ]{Lam}, $R$ corresponds to
 $\Bbb{Z} \oplus \Bbb{Z}$
and so using the above lemma $R$ has couniserial dimension $2$.     
  If $\lbrace p_i \vert i \geq 1 \rbrace$ is the set of all prime numbers,
 then $\Bbb{Q}/ \Bbb{Z} = \sum _{i = 1}^{\infty} K_i /\Bbb{Z}$,
  where $K_i = \lbrace m/p_{i}^{n} \vert n \geq 0 $ and $m \in \Bbb{Z}\rbrace$. Then 
  take $Q_{n} =  \sum _{i = n} ^{\infty} K_i$. Then $M_2(Q_1) \geq M_2({Q_2}) \geq ... $ is a descending chain
   of  $R$-submodules of $Q$  which are not uniform $R$-modules.
     Assume that for some $n$, $M_2({Q_{n}}) \cong M_2({Q_{n + 1}})$  with 
   an  $R$-isomorphism $\phi$. Let $\phi (\left( \begin{array}{ccc} 1& 0 \\ 0& 1 \end{array}\right)) = 
   \left( \begin{array}{ccc} m_1/t_1& m_2/t_2\\ m_3/t_3&  m_4/t_4 \end{array}\right)$, where $m_i/t_i \in Q_{n + 1}$.
   Suppose that $j \geq 1$ and   
    $\phi (\left( \begin{array}{ccc} 1/p_{n}^{j}& 0 \\ 0& 1/p_{n}^{j} \end{array}\right) )=
    \left( \begin{array}{ccc} m_{1,j}/t_{1,j} & m_{2,j}/t_{2,j}\\ m_{3,j}/t_{3,j}&  m_{4,j}/t_{4,j} \end{array}\right)$, where
    $p_n$ does not odd non of  $t_{i,j}$ for all $1 \leq i \leq 4$. 
    Then since $\phi$ is additive, we can easily see that 
    $\left( \begin{array}{ccc} m_{1,j} p_n^{j}/t_{1,j} & m_{2,j}p_n^{j}/t_{2,j}\\ m_{3,j}p_n^{j}/t_{3,j}&  m_{4,j}p_n^{j}/t_{4,j} \end{array}\right) = 
     \left( \begin{array}{ccc} m_1/t_1& m_2/t_2\\ m_3/t_3&  m_4/t_4 \end{array}\right)$
  and this implies that $p_n^{j} \vert m_i$ for all $j \geq 1$ and $0 \leq i \leq 4$ and so $m_i = 0$, a contradiction.
So $Q _R$ does not have couniserial dimension.
} 
\end{example}

\section{\hspace{-6mm}.  Some Applications.}

A  right  $R$-module $M$ which has a composition series is called a
module of {\it finite length.} A right $R$-module $M$ is of  finite length  if and only if $M$ is both
artinian and noetherian. The length of a composition series of $M_R$ is said to be the
length of $M_R$ and is denoted by length$(M)$. Clearly, by Corollary \ref{2.3}, a module of finite length
has   couniserial dimension. The next result shows a relation between      couniserial dimension
   of a finite length module $M$ and  length$(M)$.

  \begin{proposition}\label{semi1} Let $M$ be a right  $R$-module of finite length. Then the following statements hold: \\
    {\rm (1)} If $N$ is a submodule of $M$, then {\rm c.u.dim}$(M/N) \leq $ {\rm c.u.dim}$(M)$.\\
    {\rm (2)}   {\rm c.u.dim}$(M)  \leq $ {\rm length}$(M)$.
  \end{proposition}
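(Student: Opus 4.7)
\textbf{Proof plan for Proposition~\ref{semi1}.}

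The plan is to prove (1) and (2) by a simultaneous induction on $n = \mathrm{length}(M)$. The base case $n = 1$ is immediate: then $M$ is simple, hence uniform with c.u.dim$(M) = 1$, and every quotient $M/N$ is either $0$ or $M$, so both claims hold.

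For the inductive step I fix $M$ of length $n \geq 2$ and assume both statements for every module of length less than $n$. Statement (2) drops out directly: any non-zero proper submodule $L < M$ has length$(L) \leq n - 1$, which by Jordan--H\"older also forces $L \ncong M$. The inductive hypothesis applied to (2) gives c.u.dim$(L) \leq $ length$(L) \leq n - 1$, so every non-zero $L \leq M$ with $L \ncong M$ lies in $\zeta_{<n}$. Hence $M \in \zeta_n$ and c.u.dim$(M) \leq n$, which is (2).

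For statement (1) I would exploit the lattice correspondence between submodules of $M/N$ and submodules $L$ of $M$ with $N \leq L$. Pick a non-zero $L/N \leq M/N$ with $L/N \ncong M/N$. If $L = M$ then $L/N = M/N$, a contradiction; hence $L < M$, so length$(L) < n$ and the inductive hypothesis applied to (1) on $L$ gives c.u.dim$(L/N) \leq $ c.u.dim$(L)$. Combining with Remark~\ref{1.3}, which yields c.u.dim$(L) \leq $ c.u.dim$(M)$, one obtains c.u.dim$(L/N) \leq $ c.u.dim$(M)$. To conclude that $M/N \in \zeta_\alpha$ for $\alpha = $ c.u.dim$(M)$, this last bound must be refined to a strict inequality.

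The main obstacle is producing the strict inequality c.u.dim$(L/N) < $ c.u.dim$(M)$. Remark~\ref{1.3} delivers it automatically unless $M$ is uniform (the alternative exceptional case, $L \cong M$, being ruled out by the length gap). In the uniform case c.u.dim$(M) = 1$, and one must verify that $M/N$ itself is uniform (or zero), using that a finite length uniform module has simple essential socle. This is the delicate step where the argument is most fragile; once it is in hand, the remainder of the induction fits together cleanly.
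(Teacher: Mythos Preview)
Your overall plan mirrors the paper's: both argue by induction on length$(M)$, and for part~(1) both reduce to the strict inequality c.u.dim$(K) < {}$c.u.dim$(M)$ for proper submodules $K < M$ (the paper simply writes ``c.u.dim$(K/N) \leq {}$c.u.dim$(K) < {}$c.u.dim$(M)$'' without justifying the second inequality). You rightly isolate this as the delicate step, observing that Remark~\ref{1.3} supplies it only when $M$ is not uniform.

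However, your proposed fix for the uniform case --- that every quotient of a finite-length uniform module is again uniform --- is false, and with it part~(1) itself fails. Take $R = k[x,y]/(x,y)^2$ for a field $k$, and let $E$ be the injective hull of the simple $R$-module $k = R/(x,y)$. Then $E$ has length $3$ with simple essential socle, so $E$ is uniform and c.u.dim$(E) = 1$; but $E/{\rm soc}(E) \cong k \oplus k$ is semisimple of length~$2$, not uniform, and c.u.dim$(k \oplus k) = 2 > 1$. Hence the inequality c.u.dim$(M/N) \leq {}$c.u.dim$(M)$ fails for $M = E$ and $N = {\rm soc}(E)$. The paper's own proof has exactly the gap you identified: the asserted strict inequality c.u.dim$(K) < {}$c.u.dim$(M)$ breaks down whenever $M$ is uniform but not uniserial. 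Your argument for part~(2), by contrast, is correct and coincides with the paper's.
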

\begin{proof}
   {\rm (1) The  proof is by induction on $n$, where length$(M) = n$.  The case $n = 1$  is clear.
    Now, let $ n > 1 $ and
 assume that the assertion is true for all
modules with length less than $n$. If $N$ is a non-zero submodule of $M$, then the length$(M/N) <  n$. Thus for every proper 
submodule
 $K/N$ of
$M/N$,    by induction, c.u.dim$(K/N) \leq $ c.u.dim$(K) < $ c.u.dim$(M)$. Now,  Remark \ref{1.3} implies that c.u.dim$(M/N)  \leq $ c.u.dim$(M)$. \\
(2)   The  proof is by induction on  length$(M) = n$. The case $n = 1$ is clear. Now if $n > 1$ and $K$ is
 a proper submodule of $M$,
then, by assumption, c.u.dim$(K) \leq $ length$(K) < $ length$(M)$. Thus by Remark \ref{1.3}, c.u.dim$(M) \leq $ length$(M)$.  $~\square$}
\end{proof}

  Recall that an $R$-module $M$ is called {\it co-semisimple} if every simple $R$-module is $M$-injective, 
or equivalently, Rad$(M/N) = 0$ for every submodule $N\leq M$ (See \cite[Theorem 23.1]{wis}).
The next proposition gives a condition as to when  a module of finite length is semisimple. It may be of
interest to state that for the finite length $\Bbb{Z}$-module $\Bbb{Z} _4$, $\oplus _{i = 1}^{\infty}\Bbb{Z} _4$ does not possess
couniserial dimension.
  
\begin{proposition}\label{Artinian} Let $M$ be a non-zero right $R$-module of finite length.
Then $M$ is a semisimple $R$-module if and only if 
 for every submodule $N$ of $M$ the right $R$-module $\oplus _{i = 1}^{\infty }M/N$ has
couniserial dimension.
\end{proposition}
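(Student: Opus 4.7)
The plan is to prove the two implications separately, with the forward direction being essentially formal and the reverse direction requiring a Krull--Schmidt--Azumaya argument.

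For the forward direction, assume $M$ is semisimple of finite length. For any submodule $N \leq M$, the quotient $M/N$ is semisimple and of finite length, so it decomposes as a finite direct sum of simple modules involving only finitely many isomorphism types. Then $\oplus_{i=1}^{\infty} M/N$ is itself semisimple and, after regrouping summands, is a finite direct sum of homogeneous semisimple modules, hence has couniserial dimension by Remark \ref{semisimple eq}.

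For the converse I would argue the contrapositive: suppose $M$ is not semisimple, and construct a submodule $N$ for which $\oplus_{i=1}^{\infty} M/N$ fails to have couniserial dimension. Since $M$ has finite length, Krull--Schmidt yields a decomposition $M = M_1 \oplus \cdots \oplus M_r$ into indecomposables. Not all $M_i$ can be simple (else $M$ would be semisimple), so some indecomposable summand $L := M_j$ has length at least $2$; projecting $M$ onto $L$ realises $L$ as a quotient $M/N$ for a suitable $N$, so it suffices to show that $\oplus_{i=1}^{\infty} L$ lacks couniserial dimension. Pick a simple submodule $S$ of the socle of $L$, fix copies $L_i$ of $L$ in $\oplus_{i=1}^{\infty} L$ together with subcopies $S_i \subseteq L_i$, and consider the chain
\[
G_k \;=\; S_1 \oplus \cdots \oplus S_k \oplus L_{k+1} \oplus L_{k+2} \oplus \cdots
\]
of submodules. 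Because $S \subsetneq L$ each containment $G_k \supsetneq G_{k+1}$ is strict, and each $G_k$ has uniform dimension at least $2$ and so is non-uniform.

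By Proposition \ref{2.2}, it remains to verify that the $G_k$ are pairwise non-isomorphic, and this is the crux of the argument. Both $S$ (being simple) and $L$ (being indecomposable of finite length) have local endomorphism rings, so the decomposition $G_k \cong S^{(k)} \oplus L^{(\mathbb{N})}$ is an indecomposable decomposition into modules with local endomorphism rings, and Azumaya's theorem makes the multiplicities of each isomorphism type an invariant of $G_k$. Since $S$ is simple and $L$ is not, we have $S \ncong L$, and thus the multiplicity of $S$ in $G_k$ is exactly $k$; this distinguishes the $G_k$. The main obstacle is precisely this uniqueness-of-multiplicity step, which is handled by invoking Krull--Schmidt--Azumaya; everything else reduces to bookkeeping with Remark \ref{semisimple eq} and Proposition \ref{2.2}.
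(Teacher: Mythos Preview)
Your argument is correct, but it takes a different route from the paper's. For the forward direction you both invoke Remark~\ref{semisimple eq} in the same way. For the converse, the paper argues by induction on $\mathrm{length}(M)$: the inductive hypothesis forces every proper quotient to be semisimple, and a short analysis of the radical and socle then reduces to the case where $M$ itself is an indecomposable of length~$2$ with simple socle $N = \mathrm{Rad}(M)$. At that point the paper forms the same descending chain $G_k = N^{(k)} \oplus (\oplus_{i>k} M)$ that you do, but instead of invoking Krull--Schmidt--Azumaya for infinite decompositions, it uses Proposition~\ref{2.2} to obtain $G_k \cong G_{k+1}$, traps the finitely generated $N^{(k+1)}$ inside finitely many copies of $M$, cancels the simple module $N$ (which is cancellable) to get $M^m \cong N \oplus T$, and reaches a contradiction by comparing $\mathrm{length}(\mathrm{Rad}(T))$ with $\mathrm{length}(\mathrm{soc}(T))$. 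Your approach bypasses the inductive reduction entirely by picking a non-simple indecomposable summand $L$ of $M$ via Krull--Schmidt at the outset, and then distinguishes the $G_k$ in one stroke using Azumaya's uniqueness theorem for decompositions into modules with local endomorphism rings. The trade-off: your proof is shorter and more transparent, but imports a stronger external result (Azumaya for countably infinite sums), whereas the paper stays within the cancellation machinery already developed and only ever cancels a single simple module at a time.
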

\begin{proof} 
($\Rightarrow$) c.f. Remark \ref{semisimple eq}.\\
($\Leftarrow$) 
For every submodule $N$ of $M$ the right $R$-module $\oplus _{i = 1}^{\infty }M/N$ has
couniserial dimension. Clearly, this also holds for any factor module of $M$. 
We will  proof the result   by induction on the length$(M) =n $. The case $n = 1$ is clear.
 Now assume that $n > 1$ and the result is 
true for all modules of length less that $n$. 
Let  $K $ be  a non-zero submodule of $M$.
  Since length$(M/K) < n$, by   
    the inductive hypothesis, $M/K$ is semisimple. 
Therefore, for every non-zero submodule $K$ of $M$, Rad$(M/K) = 0$.
If Rad$(M) = 0$, then $M$ is co-semisimple.
Let  $S$ be a simple submodule of $M$.  Consider 
the exact sequence $0 \longrightarrow S \longrightarrow M \longrightarrow M/S \longrightarrow 0$ which  splits, because $M$ is co-semisimple. 
 Therefore, $M$ is semisimple. Next suppose that 
   Rad$(M) \neq 0$.    Let $S$ be a  simple submodule of $M$.  Because by the above
Rad$(M/S) = 0$, we obtain  Rad$(M) \leq S$. This implies Rad$(M) = S$ and so $M$ has only one simple submodule. 
 Thus  Rad$(M) =$ soc$(M) = S$ is a simple module. 
   Suppose that  $M$ is not semisimple.  
Let $N$ be a maximal submodule of $M$.  Then for every submodule  $K \leq N < M$,  
$\oplus _{i = 1}^{\infty }N/K $ is a submodule of $ \oplus _{i = 1}^{\infty }M/K$ and thus 
$\oplus _{i = 1}^{\infty }N/K$ has couniserial 
dimension. Since length$(N) < n$, we conclude that $N$ is semisimple. 
Thus $N = $ soc$(M) = $ Rad$(M)$ is a simple module  and so  $M$ is of length $2$.  \\
\indent Now consider the descending chain
$$ N \oplus (\oplus _{i = 2}^{\infty }M ) > N^{(2)}  \oplus (\oplus _{i = 3}^{\infty }M ) > ... $$ 
of submodules of $\oplus _{i = 1}^{\infty }M $.
Using Proposition \ref{2.2}, there exists $k \geq 1$ such that
$N^{(k)} \oplus (\oplus _{i = k + 1}^{\infty }M) \cong N^{(k + 1)} \oplus (\oplus _{i = k + 2}^{\infty }M) $. Since
$ N^{(k + 1)}$ is finitely generated, there exists $m \geq 0$ and an $R$-module $T$, such that
$ N^{(k)} \oplus M^{m} \cong N^{(k + 1)} \oplus T$.
$N$ is simple and so it has cancellation property and thus $M^{m} \cong N \oplus T$. This implies
Rad$(T)$ is semisimple of length $m$ and   length$($soc$(T)) =  m-1$,  a contradiction. $~\square$ 
\end{proof}

Recall that a ring $R$ is called {\it right  bounded} if every essential right  ideal contains a
two-sided ideal which is essential as a right  ideal. A ring $R$ is called right  {\it fully bunded}
if every prime factor ring is right bounded. A right noetherian right fully bounded ring
is commonly abbreviated as a right FBN ring. Clearly all commutative noetherian rings are example of right FBN rings. 
Finite matrix rings over commutative noetherian rings are a large class of right FBN rings  which are not commutative. 
In  \cite [Theorem  2.11] {Hiranocom}, Hirano and  et.al. showed  that a right FBN ring $R$  is semisimple if and only if every right 
 module of 
finite length is semisimple. As a consequence of the above proposition we have:

\begin{corollary}
A right FBN ring $R$ is semisimple
 if and only if for every  finite length module  $M$, the module $\oplus _{i = 1}^{\infty }M$
has couniserial dimension.  
\end{corollary}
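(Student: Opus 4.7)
The plan is to derive this corollary as a direct splicing of Proposition \ref{Artinian}, which was just proved, with the theorem of Hirano et al.\ cited from \cite{Hiranocom} characterizing semisimple right FBN rings as exactly those over which every finite length module is semisimple. No new ideas are required beyond wiring these together.

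For the forward direction, I would start from the assumption that $R$ is semisimple artinian. Then $R$ has only finitely many isomorphism classes of simple right modules $S_1,\dots,S_k$, and any finite length right module is of the form $M\cong S_1^{n_1}\oplus\cdots\oplus S_k^{n_k}$. Consequently $\oplus_{i=1}^{\infty}M$ is still semisimple and can be written as the \emph{finite} direct sum of homogeneous semisimple pieces $S_j^{(\aleph_0 n_j)}$ for $j=1,\dots,k$. By Remark \ref{semisimple eq}, such a module has couniserial dimension, which is exactly what is asserted.

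For the converse, I would take an arbitrary finite length right $R$-module $M$ and verify the hypothesis of Proposition \ref{Artinian}. The key observation is that every submodule $N\leq M$ has finite length, hence so does $M/N$; therefore the blanket assumption of the corollary applied to $M/N$ gives that $\oplus_{i=1}^{\infty}M/N$ has couniserial dimension. Proposition \ref{Artinian} then concludes that $M$ is semisimple. Since $M$ was an arbitrary finite length right $R$-module, every finite length right $R$-module is semisimple, and the FBN hypothesis combined with \cite[Theorem 2.11]{Hiranocom} immediately yields that $R$ is semisimple.

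There is no real obstacle here; the only point that requires a moment's care is ensuring that the hypothesis about finite length modules passes to all their quotients, which it does automatically since quotients of finite length modules are of finite length. Everything else is bookkeeping on top of the two ingredients cited above.
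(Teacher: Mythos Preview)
Your proposal is correct and follows exactly the route the paper intends: the text immediately preceding the corollary cites \cite[Theorem 2.11]{Hiranocom} and presents the corollary as a direct consequence of Proposition \ref{Artinian}, which is precisely the splicing you carry out. The only detail you add beyond what the paper leaves implicit is the observation that quotients of finite length modules are again of finite length, so the hypothesis descends to all $M/N$; this is indeed the one point that needs to be said, and you handle it correctly.
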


  \begin{proposition} \label{anti is injective}
   Let $P$ be  an anti-coHopfian projective right $R$-module. If $\oplus_{ i = 1}^{\infty}E(P) $ has couniserial dimension, then
  $P$ is injective.
  \end{proposition}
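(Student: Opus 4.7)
The approach is to adapt the proof of Lemma~\ref{semisimple1}, which handled the case where the inner module was simple, to the more general case of an anti-coHopfian projective module $P$. By Lemma~\ref{anti-coHopfian}, $P$ is cyclic, uniform and noetherian, so $E(P)$ is an indecomposable injective in which $P$ is essential. Using the essential embedding $P \leq E(P)$, I will form the descending chain
$$P \oplus \bigoplus_{i=2}^{\infty}E(P) \,\geq\, P^{(2)} \oplus \bigoplus_{i=3}^{\infty}E(P) \,\geq\, P^{(3)} \oplus \bigoplus_{i=4}^{\infty}E(P) \,\geq\, \cdots$$
of submodules of $\bigoplus_{i=1}^\infty E(P)$, with the same convention as in Lemma~\ref{semisimple1}. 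Each term has at least two nonzero direct summands and so is not uniform; hence by Proposition~\ref{2.2} there exists $n \geq 1$ with
$$P^{(n)} \oplus \bigoplus_{i=n+1}^\infty E(P) \;\cong\; P^{(n+1)} \oplus \bigoplus_{i=n+2}^\infty E(P).$$

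The next step is to show that $P$ is a cancellable module. First I observe that $\mathrm{End}(P)$ is a domain: if $f \in \mathrm{End}(P)$ is nonzero with $K := \ker f \ne 0$, then anti-coHopfianness forces $K \cong P$, and the nonzero submodule $f(P) \cong P/K$ must also be isomorphic to $P$; composing the natural surjection $P \twoheadrightarrow P/K$ with this isomorphism produces a non-injective surjective endomorphism of the noetherian (hence Hopfian) module $P$, which is a contradiction. Using projectivity together with cyclicity I will write $P = eR$ for a primitive idempotent $e$, so that $\mathrm{End}(P) = eRe$ is a domain with the strong property that every nonzero right ideal of $R$ inside $eR$ is isomorphic to $eR$. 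The plan is to exploit this concrete structure to conclude that $P$ enjoys the cancellation property.

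With cancellation in hand, I cancel $P^{(n)}$ from both sides of the above isomorphism to obtain
$$\bigoplus_{i=n+1}^{\infty}E(P) \;\cong\; P \oplus \bigoplus_{i=n+2}^{\infty}E(P).$$
Because $P$ is cyclic, the copy of $P$ on the right, regarded as a direct summand of the left, is generated by a single element and therefore sits inside some finite subsum $E(P)^k$. The standard observation that a direct summand of a direct sum contained in a finite subsum is already a direct summand of that subsum (verified by a short decomposition argument as in Lemma~\ref{semisimple1}) yields $E(P)^k \cong P \oplus L$ for some $L$. Since $E(P)^k$ is injective, so is its direct summand $P$.

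The main obstacle is the cancellation step. In Lemma~\ref{semisimple1} the inner module was simple, and Schur's lemma made $\mathrm{End}(M)$ a division ring, hence semilocal, hence cancellable for free; here $\mathrm{End}(P)$ is only known to be a domain (e.g.\ $\mathrm{End}(\Bbb{Z}_{\Bbb{Z}}) = \Bbb{Z}$), and such rings can have stable range larger than $1$. I expect the argument will need to combine anti-coHopfianness with the projectivity hypothesis $P = eR$ in a nontrivial way to force the endomorphism ring to have stable range $1$; alternatively, one might try to sidestep cancellation altogether by arranging the Proposition~\ref{2.2} isomorphism so that the finite generation of $P^{(n+1)}$ places its preimage inside a finite subsum $P^{(n)} \oplus E(P)^m$ of the left-hand side, and then extracting $P$ directly as a summand of $E(P)^m$ by further manipulation---though making that extraction work without cancellation is precisely the delicate point.
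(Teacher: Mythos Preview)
Your overall architecture---the descending chain, Proposition~\ref{2.2}, cancelling $P^{(n)}$, and extracting $P$ as a summand of some $E(P)^k$ via finite generation---matches the paper exactly. The only issue is the one you yourself flag: you do not prove that $P$ is cancellable, and your proposed route (forcing $\mathrm{End}(P)\cong eRe$ to be semilocal or to have stable range~$1$) will not work in general. Already $P=\Bbb Z$ is anti-coHopfian projective with $\mathrm{End}(P)=\Bbb Z$, which has stable range~$2$; so nothing about anti-coHopfian projectivity alone forces good stable range.

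The paper bypasses stable range entirely with a short direct argument that uses only one fact: \emph{every submodule of $P$ is projective}, since a nonzero submodule is isomorphic to $P$ itself. Given two decompositions $M=P\oplus B=P'\oplus C$ with $P'\cong P$, restrict the projection $p_1\colon M\to P$ to $C$; the kernel is $C\cap B$ and the image $I\leq P$ is projective, so the short exact sequence splits and $C\cong(C\cap B)\oplus I$. Symmetrically, projecting $B$ to $P'$ gives $B\cong(C\cap B)\oplus J$ with $J\leq P'$. Since any nonzero submodule of $P$ (or of $P'\cong P$) is isomorphic to $P$, one obtains $I\cong J$, hence $B\cong C$. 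This is the missing idea: anti-coHopfianness is used not to control $\mathrm{End}(P)$ but to make the \emph{images} of the two cross-projections isomorphic, which is all that is needed.
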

  \begin{proof}
   We  first show that $P$ has cancellation property. 
 Let $M = P \oplus B \cong P \oplus B'$. So there exist  submodules $P'$ and $C $ of $M$ such that  $M = P \oplus B =  P' \oplus C$
 and
 $P' \cong P$ and $ C \cong B'$. If $p_1$ is a  projection map from $M = P \oplus B$ on to $P$. Then with
 restriction of $p_1$ to $C$  we have an
 exact
 sequence $ 0 \longrightarrow C \cap B \longrightarrow C \longrightarrow I \longrightarrow 0$, such that $I$ is a submodule of $P$.
 Note that every submodule of $P$  is  projective, because it is anti-coHopfian. So    $I$ is projective and
 thus  $C \cong C \cap B \oplus I$. Similarly   by considering map $p_2$ from $M = P' \oplus C$ to $P'$ we
 have
 $B \cong C \cap B \oplus J$ for some submodule $J$ of $P'$. Since $J \cong I \cong P$, we have $B\cong C$ and so $B\cong B'$.
Then $P$ has cancellation property. Now consider the
   descending
   chain
  $$ P \oplus ( \oplus _{ i = 2}^{\infty}  E(P)) \geq  P^{(2)} \oplus ( \oplus _{ i = 3}^{\infty} E(P)) \geq ... $$
   of submodules of
  $\oplus _{i = 1}^{\infty} E(P)$.
   Then, by Proposition \ref{2.2},
   there exists $n\geq 1$ such that
   $$P^{(n)} \oplus ( \oplus _{i = n + 1}^{\infty}E(P))\cong P^{(n + 1)}\oplus(\oplus _{i = n + 2}^{\infty} E(P))$$
    and so
    $\oplus _{i = n + 1}^{\infty} E(P) \cong P \oplus (\oplus _{i = n + 2}^{\infty} E(P))$ , because $P$ is cancelable . Since $P$ 
    is finitely
    generated, there exists a right module $L$ such that  for some $k$, $E(P) ^{k} \cong P\oplus L $. This shows $P$ is injective. 
    $~\square$

  \end{proof}

 As a consequence of the  above proposition we have the following corollary:

\begin{corollary} \label{domain}
Let  $R$ be  a principal right ideal domain with  maximal right quotient ring  $Q$ ( which is a division ring).
 If  the right $R$-module $\oplus_{i = 1}^{\infty} Q$  has couniserial dimension, then
  $R = Q$.
\end{corollary}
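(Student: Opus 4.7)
The plan is to deduce this as a direct application of Proposition \ref{anti is injective}, taking the projective module in that proposition to be $P = R_R$ itself. The core observation is that for a principal right ideal domain, the regular module $R_R$ is already anti-coHopfian, and its injective hull is precisely $Q$, so the hypothesis of the corollary matches exactly the hypothesis of Proposition \ref{anti is injective}.

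First I would check the three required properties of $P = R_R$. Projectivity is immediate. For anti-coHopfian, note that every non-zero right ideal of $R$ has the form $xR$ with $x \neq 0$, and since $R$ is a domain the left multiplication map $r \mapsto xr$ is an $R$-module isomorphism $R \to xR$; hence $R$ is isomorphic to each of its non-zero submodules. For the identification $E(R_R) = Q$: since $R$ is a domain it is right non-singular, and then by the result quoted in Section~1 (from Goodearl's book) the injective hull of $R_R$ coincides with the maximal right quotient ring $Q$. In particular $\oplus_{i=1}^{\infty} E(P) = \oplus_{i=1}^{\infty} Q$, which by hypothesis has couniserial dimension.

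With these verifications in place, Proposition \ref{anti is injective} applies and yields that $P = R_R$ is injective. Since $R_R$ is essential in its injective hull $Q$, injectivity of $R_R$ forces $R = E(R_R) = Q$, which is the desired conclusion.

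I do not anticipate a serious obstacle here: the work has already been done in Proposition \ref{anti is injective}, and the only subtlety is pattern-matching the hypotheses. If anything, the single point that merits care is the identification $E(R_R) = Q$, which relies on the right non-singularity of $R$ (automatic for a domain) together with the cited fact about maximal right quotient rings of right non-singular rings.
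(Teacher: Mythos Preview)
Your proof is correct and matches the paper's intended approach: the corollary is stated immediately after Proposition \ref{anti is injective} as a direct consequence, with no separate proof given, and your argument spells out precisely the verification that $P=R_R$ satisfies the hypotheses of that proposition. The identification $E(R_R)=Q$ via right non-singularity and the conclusion $R=Q$ from injectivity of $R_R$ are exactly the steps implicit in the paper.
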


We need the following lemmas to prove the next  theorem. Using Proposition \ref{2.2} we can see that:
 
\begin{lemma} \label{factor}
Let $I$ be a two sided ideal of $R$ and $M$ be an $R/I$-module. If $M$ as $R$-module has couniserial dimension, then
$M$ as $R/I$-module  has couniserial dimension.
\end{lemma}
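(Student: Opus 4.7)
The plan is to use Proposition \ref{2.2} as a bridge: it characterizes ``has couniserial dimension'' purely in terms of descending chains of submodules, and the $R$-submodule and $R/I$-submodule lattices of $M$ coincide (since $M$ is an $R/I$-module, so $MI=0$, every $R$-submodule is automatically an $R/I$-submodule, and vice versa). Thus any verification done at the level of chains should transport between the two module structures almost for free.

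First, I would take an arbitrary descending chain $M_1 \geq M_2 \geq \cdots$ of $R/I$-submodules of $M$. By the lattice identification above, this is also a descending chain of $R$-submodules of the $R$-module $M$. Since $M_R$ has couniserial dimension, Proposition \ref{2.2} yields some $n \geq 1$ such that either $M_n$ is uniform as an $R$-module, or $M_n \cong M_k$ as $R$-modules for every $k \geq n$.

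Next, I would transfer each conclusion back to the $R/I$-module structure. Uniformity is defined purely in terms of the submodule lattice, so $M_n$ uniform as an $R$-module is the same as $M_n$ uniform as an $R/I$-module. For the isomorphism case, I would check that any $R$-linear isomorphism $\varphi \colon M_n \to M_k$ between $R/I$-modules is automatically $R/I$-linear: for $x \in M_n$ and $r+I \in R/I$, the $R/I$-action is defined by $x(r+I) = xr$, hence $\varphi(x(r+I)) = \varphi(xr) = \varphi(x)r = \varphi(x)(r+I)$. So $\varphi$ is an $R/I$-module isomorphism.

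Having verified that the chain $M_1 \geq M_2 \geq \cdots$ of $R/I$-submodules satisfies the stabilization-or-uniformity condition, the converse direction of Proposition \ref{2.2} then gives that $M$ has couniserial dimension as an $R/I$-module. There is no real obstacle here; the only thing to pin down carefully is the routine verification that $R$-linearity implies $R/I$-linearity between $R/I$-modules, which is immediate.
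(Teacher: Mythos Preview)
Your argument is correct and is precisely the route the paper intends: the paper's entire proof is the phrase ``Using Proposition~\ref{2.2} we can see that,'' and you have simply spelled out the obvious transfer (same submodule lattice, same notion of uniformity, $R$-linear maps between $R/I$-modules are automatically $R/I$-linear). Nothing more is needed.
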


\begin{lemma} \label{notherian uniform}
If all finitely generated right modules have couniserial dimension, then every right module contains a noetherian uniform module. 
\end{lemma}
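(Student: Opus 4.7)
The plan is to identify inside $M$ a cyclic submodule that is anti-coHopfian; by Lemma \ref{anti-coHopfian} such a module is automatically uniform and noetherian, and hence is the desired submodule. By hypothesis every non-zero cyclic submodule of $M$ has couniserial dimension, so the ordinal $\alpha = \min\{{\rm c.u.dim}(yR) : 0 \neq y \in M\}$ exists (ordinals being well-ordered), and I would choose $C_0 = y_0 R$ attaining it. The minimality together with Remark \ref{1.3} forces every non-zero submodule $N \leq C_0$ to have ${\rm c.u.dim}(N) = \alpha$: any non-zero cyclic $zR \leq N$ is itself a cyclic submodule of $M$, so ${\rm c.u.dim}(zR) \geq \alpha$ by minimality and $\leq \alpha$ by Remark \ref{1.3}, hence equal; then $N$, sandwiched between $zR$ and $C_0$, also has couniserial dimension $\alpha$.

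If $C_0$ is not uniform, the ``not uniform'' clause of Remark \ref{1.3} upgrades each such equality to $N \cong C_0$ for every non-zero $N \leq C_0$, so $C_0$ is anti-coHopfian, hence uniform by Lemma \ref{anti-coHopfian}---a contradiction. Therefore $C_0$ must be uniform, and the remaining task is to promote these equalities of couniserial dimensions to isomorphisms. For this, I would exploit the hypothesis on $C_0 \oplus C_0$, which is finitely generated (so has couniserial dimension) and has uniform dimension $2$ (so is not uniform). Given a proper non-zero $N \leq C_0$, applying Proposition \ref{2.2} to a descending chain $C_0 \oplus C_0 \geq N \oplus C_0 \geq N_2 \oplus C_0 \geq \cdots$ of non-uniform submodules forces an eventual isomorphism $N_n \oplus C_0 \cong N_k \oplus C_0$ for all $k \geq n$; a cancellation of the common $C_0$ summand would then yield isomorphisms amongst the $N_n$, and a refinement of this analysis is expected to force $N \cong C_0$, establishing that $C_0$ is anti-coHopfian.

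The principal obstacle is the cancellation step: cancelling $C_0$ from $N_n \oplus C_0 \cong N_k \oplus C_0$ requires $C_0$ to enjoy the cancellation property. The planned route is to deduce from the minimality of $\alpha$ and the uniformity of $C_0$ (which already rules out non-trivial idempotents in ${\rm End}(C_0)$) that ${\rm End}(C_0)$ is semilocal, so that $C_0$ is cancellable by the standard result recalled in the introductory discussion. Alternatively, one could attempt to extract $N \cong C_0$ directly from Proposition \ref{2.2} applied to a more delicate chain inside $C_0 \oplus C_0$ that interpolates between $N \oplus C_0$ and $C_0 \oplus C_0$---an approach that relies critically on the strong hypothesis that \emph{every} finitely generated right $R$-module, not merely $C_0 \oplus C_0$, has couniserial dimension.
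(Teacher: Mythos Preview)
Your minimisation argument handles the non-uniform case cleanly, but the uniform case has a genuine gap. You need to cancel $C_0$ from an isomorphism $N_n \oplus C_0 \cong N_k \oplus C_0$, and your two proposed routes do not close. Uniformity of $C_0$ tells you only that $\operatorname{End}(C_0)$ has no non-trivial idempotents; this is a long way from semilocal (any commutative domain that is not semilocal is already a counterexample), so the appeal to the cancellation-via-semilocal-endomorphism-ring result is unjustified. The alternative ``more delicate chain'' is left as a hope rather than an argument, and I do not see how to realise it: once $C_0$ is uniform, all non-zero submodules of $C_0\oplus C_0$ that project onto both factors look alike from the couniserial viewpoint, and Proposition~\ref{2.2} will only hand you isomorphisms of the form $A\oplus C_0 \cong B\oplus C_0$, bringing you back to the cancellation problem.

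The paper sidesteps this entirely with a single idea you are missing: instead of pairing with $C_0$, pair with a \emph{simple} module $S$. Simple modules are always cancellable (their endomorphism ring is a division ring), and $S\oplus M$ is still finitely generated for any cyclic $M$, so the hypothesis applies. Concretely, the paper does not minimise at all: it assumes a cyclic $M$ contains no anti-coHopfian submodule, builds a chain $M\geq M_1\geq M_2\geq\cdots$ with $M_i\ncong M_{i+1}$ (possible because no $M_i$ is anti-coHopfian), and then applies Proposition~\ref{2.2} to the chain $S\oplus M\geq S\oplus M_1\geq\cdots$. None of these summands is uniform, so some $S\oplus M_n\cong S\oplus M_{n+1}$, and cancelling $S$ gives the contradiction $M_n\cong M_{n+1}$. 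Your minimisation framework can be repaired the same way: replace the second copy of $C_0$ by $S$ and run the chain inside $S\oplus C_0$.
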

\begin{proof}
By Lemma \ref{anti-coHopfian} it is enough to show that every cyclic module contains an anti-coHopfian module.
Let $M$ be a  non-zero cyclic right module which does not contain anti-coHopfian module and let $S$ be a simple module.
$M$ is not anti-coHopfian, then $M$  has a
 non-zero submodule $M_1 \ncong M$ and $M_ 1$ has a non-zero submodule $M_2$ such that
$M_2 \ncong M_1$. By continuing in 
 this manner we have a descending chain $ S \oplus M \geq S\oplus  M_1 \geq S \oplus M_2 \geq ... $ of
submodules of
$S \oplus M$. Since $S\oplus M$ is finitely generated,   by Proposition \ref{2.2}, $ S \oplus M_n \cong  S \oplus M_{n + 1}$ 
for some $n$. This implies that
 $M_n \cong M_{n + 1}$ for some $n$, because $S$ is cancellable and this is a contradiction. $~\square$
\end{proof}

\begin{theorem} \label{final} For a ring $R$ the following are equivalent. \\
{\rm (1)} $R$ is a semisimple artinian ring.\\
{\rm (2)} All right $R$-modules have couniserial dimension.\\
{\rm (3)} All left $R$-modules have couniserial dimension.\\
{\rm (4)} All right $R$-modules have uniserial dimension.\\
{\rm (5)} All left $R$-modules have uniserial dimension.
\end{theorem}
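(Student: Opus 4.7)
The implication $(1)\Rightarrow(i)$ for $i\in\{2,3,4,5\}$ is routine: if $R$ is semisimple artinian there are only finitely many isomorphism classes of simple right (or left) $R$-modules, every module is a direct sum of such simples, hence a finite direct sum of homogeneous semisimple modules, and by Remark \ref{semisimple eq} such a module has both couniserial and uniserial dimension.

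For the converse, the plan is to prove $(2)\Rightarrow(1)$; then $(3)\Rightarrow(1)$ follows by the same argument applied to left modules, while $(4)\Rightarrow(1)$ and $(5)\Rightarrow(1)$ follow from the corresponding theorem for uniserial dimension in the companion paper \cite{j.algebra}. So assume every right $R$-module has couniserial dimension. I would first extract four structural consequences: (i) $R$ is a right V-ring, by applying Lemma \ref{semisimple1} to each simple right $R$-module; (ii) there are only finitely many isomorphism classes of simple right $R$-modules, by Corollary \ref{finite simple}; (iii) every right $R$-module of finite length is semisimple, by Proposition \ref{Artinian}; (iv) by Lemma \ref{notherian uniform}, every nonzero right $R$-module contains an anti-coHopfian submodule, which is noetherian and uniform by Lemma \ref{anti-coHopfian}.

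The main step is to show that any such anti-coHopfian submodule $U$ is actually simple; this forces every nonzero right module to acquire a simple submodule, so $R_R$ has essential socle. Since $\mathrm{Rad}(U)=0$ in a V-ring, $U$ is cogenerated by simples and hence admits a surjection onto some simple $S$; if the kernel $K$ were nonzero the anti-coHopfian property would give $K\cong U$, and iterating would produce a descending chain $U=U_0\supsetneq U_1\supsetneq\cdots$ with each $U_n\cong U$ and $U/U_n$ of finite length $n$, hence semisimple by (iii), so $U/U_n\cong S^n$. Since $U$ is cyclic, pulling back to $R$ gives cyclic quotients $R/J_n\cong S^n$ for all $n\geq 1$, and a strictly descending chain $J_1\supsetneq J_2\supsetneq\cdots$ of right ideals of $R$. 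Applying Proposition \ref{2.2} to this chain in $R_R$ and exploiting the cancellation property of the simple $S$ across successive terms, this chain is shown to contain no uniform tail and no isomorphic stabilization, contradicting the couniserial dimension of $R_R$. Hence $K=0$ and $U\cong S$ is simple.

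Finally, an analogous descending-chain analysis shows $R_R$ is Dedekind finite: a putative decomposition $R_R\cong R_R\oplus N$ with $N\neq 0$ iterates to produce an infinite direct sum of copies of $N$ inside $R_R$, where the semiartinian property from the previous paragraph lets one extract simple subs and apply Proposition \ref{2.2} for a contradiction. Theorem \ref{dedekind} then yields $R_R=\bigoplus_{i=1}^{k}M_i$ with each $M_i$ indecomposable; each $M_i$ has a simple submodule (semiartinian) which is injective (V-ring), hence a direct summand, and indecomposability forces $M_i$ to be simple. Therefore $R_R$ is semisimple and $R$ is semisimple artinian. The main obstacle is the delicate descending-chain analysis described in this paragraph and the previous one, which must carefully leverage the cancellation of simple modules to exhibit non-isomorphic successive terms violating Proposition \ref{2.2}.
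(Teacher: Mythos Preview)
Your approach is genuinely different from the paper's and more elementary in spirit. The paper argues $(2)\Rightarrow(1)$ via a ring-theoretic reduction: it first establishes ACC on two-sided ideals, passes to a ring all of whose proper factor rings are semisimple, and then splits into a primitive case (handled with Theorem~\ref{2.5}, Goldie theory, and a Morita reduction to a domain where Lemma~\ref{example} and Corollary~\ref{domain} apply) and a non-primitive case (handled via Baccella's theorem that such a V-ring is regular and $\Sigma$-V, so $\mathrm{soc}(R)$ is injective). Your route instead tries to prove directly that every anti-coHopfian module is simple, making $R$ right semiartinian, and then finishes with Theorem~\ref{dedekind}.

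The difficulty is that your two ``delicate descending-chain analyses'' are not merely delicate --- as sketched, they do not obviously close. In the main step you produce right ideals $J_1\supsetneq J_2\supsetneq\cdots$ with $R/J_n\cong S^n$. Applying Proposition~\ref{2.2} (or its $S\oplus(-)$ variant as in Lemma~\ref{example}, using cancellation of $S$) only yields that eventually $J_n\cong J_{n+1}$; but nothing you have established forbids $J_n$ from being isomorphic to a proper submodule of itself with simple quotient $S$. Injectivity of $S$ gives splittings of inclusions $S\hookrightarrow M$, not of surjections $M\twoheadrightarrow S$, so you cannot write $R=J_n\oplus S^n$ and cancel. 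Thus neither ``no uniform tail'' (some $J_n$ could well be uniform) nor ``no isomorphic stabilization'' is ruled out, and no contradiction emerges. The Dedekind-finite step has the same shape: from $R\cong R\oplus N$ you get a chain $R_0\supsetneq R_1\supsetneq\cdots$ with every $R_k\cong R$, and Proposition~\ref{2.2} is perfectly happy with that. The paper avoids these traps precisely by importing structural results (Goldie, Morita-to-domain, $\Sigma$-V) that force the relevant modules to be cancellable or the ring to be noetherian, at which point the chain arguments of Lemma~\ref{example} and Corollary~\ref{domain} do terminate. If you want to salvage the elementary route, you would need an independent reason why $J_n\cong J_{n+1}$ is impossible---for instance a $\Sigma$-injectivity statement for $S$---but that is exactly what the paper's heavier machinery supplies.
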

\begin{proof}
For equivalence  of  (1), (4) and (5) refer  \cite [Theorem 2.6]{j.algebra}. \\
$(1) \Rightarrow (2)$. This  is clear by Corollary \ref{finite simple}.\\
$(2) \Rightarrow (1)$. At first we show $R$ satisfies ascending chain condition on two sided deals.
Let $I_1 \leq I_2 \leq ... $ be a chain of ideals of $R$. Since the right module $\oplus _{i = 1} ^{\infty} R/I_i $ has  couniserial
  dimension,
  there exists $n$ such that,  for each $j \geq n$,  $\oplus _{i = n} ^{\infty} R/I_i  \cong \oplus _{i = j} ^{\infty} R/I_i $.
  Thus they have the same
   annihilators and so  for each  $j \geq n$, $I_n = I_j$.
    Suppose  $R$ is  a non-semisimple  ring.
 By  Lemma \ref{factor} every module over a factor ring of $R$ also has couniserial
 dimension. Thus by invoking the ascending chain condition on two sided ideals we may assume
  $R$ is not semisimple artinian but every factor ring of $R$ is semisimple artinian.
Using Lemma \ref{semisimple1}, $R$ is a right V-ring. First   let us assume that  $R$ is  primitive.
 So, by  Theorem \ref{2.5}, $R$ is a  prime right  Goldie ring. 
  By \cite [Theorem 5.16] {simple noetherian ring},
 a prime right V-ring right Goldie is simple.  By Lemma \ref{notherian uniform},
  $R$  has a right noetheian uniform submodule and so  using  \cite [Corollary 7.25] {goodearl},
    $R$ is right noetherian.
     Now we show that  $R$ is Morita equivalent to
  a domain. By \cite [lemma 5.12]{5}, the  endomorphism ring of every uniform right  ideal of a prime right
   Goldie ring  is  a right ore domain.
   So by \cite [Theorem 1.2] {simple ring},
   it is enough to show that $R$ has a uniform projective generator $U$. Let us
    assume that $R$ is not uniform and u.dim$(R) = n$ and let $U$ be
   a  uniform right ideal of $R$. By  \cite [Corollary 7.25] {goodearl}, $U^{n}$ can be embedded in $R$ and also $R$ can be embedded in
    $U^{n}$. Then  c.u.dim$(R) = $ c.u.dim$(U^{n})$ and hence $R\cong U^{n}$, because $R$ is not uniform. Thus  $U$ is a projective generator uniform
     right ideal of $R$. So $R$ is Morita equivalent to a domain.  Now Lemma \ref{morita} 
      and  Lemma \ref {example} and Corollary  \ref{domain} show  that
     $R$ is simple artinian, a contradiction. So  $R$ is not primitive, but every primitive factor ring is artinian (indeed all proper factor 
     rings are artinian).
   Then since $R$ is a right V-ring,  by  \cite{Bacel proc}, $R$ is regular and $\Sigma$-V-ring.
 Also  every right ideal  contains a non-zero uniform right ideal, hence minimal.  So  $R$ has  non-zero essential soc$(R)$.
But   $R$ is $\Sigma$-V-ring and by Corollary \ref{finite simple} ,
 we have only finitely many non-isomorphic simple modules.  Thus  soc$(R)$ is injective.
This implies $R$ is  semisimple, a contradiction. This completes the proof. $~\square$
\end{proof}

{\bf ~~~~~~~~~~~~~~~~~~~~~~~~~~~~~~ Summary}

 This paper defines couniserial dimension of a module that measures how far a module is from being uniform. The results 
proved in the paper demonstrate  its importance for studding 
 the structure of modules and rings and is a beginning of a larger project to 
study its impact. We close with some open questions:\\
1) Does a module with arbitrary couniserial dimension possesses indecomposable dimension?\\
2) Is there a theory for modules with both finite uniserial and couniserial dimensions that parallels to Krull-Schmidt-Remak-Azumaya 
 theorem?

{\bf ~~~~~~~~~~~~~~~~~~~~~~~~~~~~~~Acknowledgments}

This paper was written   when the third author was visiting Ohio University, United States during May-August 2014. 
She wishes to express her deepest gratitude to Professor S. K. Jain for his kind guidance in her research project and
 Mrs. Parvesh  Jain  for the warm hospitality extended to her during her stay.
She would also like to express her thanks to Professor E. Zelmanov for his kind invitation to visit the  of University of
Californian at San Diego and to give a talk.

\end{document}